\newtheorem{theorem}{Theorem}[section]
\newtheorem{lemma}[theorem]{Lemma}
\newtheorem{drconj}[theorem]{Dyson's Rank Conjectures}
\theoremstyle{definition}
\newtheorem*{remark}{Remark}
\numberwithin{equation}{section}
\newcommand{\abs}[1]{\lvert#1\rvert}
\newcommand{\Parans}[1]{\left(#1\right)}
\newcommand\leg[2]{\genfrac{(}{)}{}{}{#1}{#2}} 
\newcommand\FL[1]{\left\lfloor#1\right\rfloor}
\newcommand\mylabel[1]{\label{#1}}
\newcommand\thm[1]{\ref{thm:#1}}
\newcommand\mythm[1]{\ref{thm:#1}}
\newcommand\lem[1]{\ref{lem:#1}}
\newcommand\myeqn[1]{(\ref{eq:#1})}
\newcommand{\beqs}{\begin{equation*}}
\newcommand{\eeqs}{\end{equation*}}
\newcommand{\beq}{\begin{equation}}
\newcommand{\eeq}{\end{equation}}
\renewcommand{\MR}[1]{\href{http://www.ams.org/mathscinet-getitem?mr={#1}}{MR{#1}}}
\newcommand{\Np}{N_\psi}
\newcommand\nutwid{\overset {\text{\lower 3pt\hbox{$\sim$}}}\nu}
\newcommand\sgtwid{\overset {\text{\lower 3pt\hbox{$\sim$}}}{sg}}
\newcommand\Nptwid{\overset {\text{\lower 3pt\hbox{$\sim$}}}{\Np}}
\newcommand\eptwid{\overset {\text{\lower 3pt\hbox{$\sim$}}}{\varepsilon}}
\newcommand{\sg}{\mbox{sg}}
\newcommand\twidit[1]{\overset {\text{\lower 3pt\hbox{$\sim$}}}#1}
\newcommand\dtwidit[1]{\overset {\text{\lower 6pt\hbox{$\sim$}}}#1}
\newcommand\Rtwid{\twidit{R}}
\begin{document}

\title{A new approach to the Dyson rank conjectures}

\author{F. G. Garvan}
\address{Department of Mathematics, University of Florida, Gainesville,
FL 32611-8105}
\email{fgarvan@ufl.edu}
\thanks{The author was supported in part by a grant from 
        the Simon's Foundation (\#318714).
A preliminary version of this paper was presented October 1, 2020 at the 
Number Theory Seminar, St. Petersburg State University and Euler International Mathematical Institute, Russia. It was also presented October 3, 2020 at
the Special Session on $q$-Series and Related Areas in Combinatorics and Number 
Theory, A.M.S.~Fall Eastern Sectional Meeting.}

\subjclass[2020]{05A17, 11F33, 11P81, 11P83, 11P84, 33D15}

\keywords{Dyson's rank conjectures, Ramanujan partition congruences, 
Hecke-Rogers series}

\date{\today}                    

\begin{abstract}
In 1944 Dyson defined the rank of a partition as the largest part
minus the number of parts, and conjectured that the residue of
the rank mod $5$ divides the partitions of $5n+4$ into five equal
classes. This gave a combinatorial explanation of Ramanujan’s
famous partition congruence mod $5$. He made an analogous conjecture
for the rank mod $7$ and the partitions of $7n+5$. In 1954 Atkin and
Swinnerton-Dyer proved Dyson’s rank conjectures by constructing
several Lambert-series identities basically using the theory of
elliptic functions. In 2016 the author gave another proof using
the theory of weak harmonic Maass forms. In this paper we describe
a new and more elementary approach using Hecke-Rogers series.
\end{abstract}

\dedicatory{Dedicated to the memory of Richard Askey, a great friend and mentor}

\maketitle

\section{Some guesses in the theory of partitions}
\mylabel{sec:intro}

In 1944, Freeman Dyson \cite{Dy44}, as an undergraduate at Cambridge,
wrote an article with the title
of this section, in which he made a number of conjectures related to
Ramanujan's famous partition congruences. Let $p(n)$ denote the number
of partitions of $n$. Ramanujan discovered and later proved 
three beautiful congruences for the partition function, namely
\begin{align}
p(5n+4) &\equiv 0 \pmod{5}, \mylabel{eq:ramcong5}\\
p(7n+5) &\equiv 0 \pmod{7}, \mylabel{eq:ramcong7}\\
p(11n+6) &\equiv 0 \pmod{11}. \mylabel{eq:ramcong11}
\end{align}
Dyson went on to remark that although there at least four different
proofs of \myeqn{ramcong5} and \myeqn{ramcong7}, it would be
more satisfying to have a direct proof of \myeqn{ramcong5}.
By this, he supposed whether there was some natural way of dividing
the partitions of $5n + 4$ into five equally numerous classes.
He went on to define the \textit{rank} of a partition as the
largest part minus the number of parts, and conjectured that the
residue of the rank mod $5$ does the job of dividing the partitions
of $5n+4$ into five equal classes. He also conjectured that the residue
of the rank mod $7$ in a similar way divides the partitions of $7n+5$
into seven equal classes thus explaining \myeqn{ramcong7}. 

More explicitly, Dyson denoted by $N(m,n)$, the number of partitions
of $n$ with rank $m$, and let $N(m,t,n)$ denote the number
of partitions of $n$ with rank congruent to $m$ mod $t$.
We restate
\begin{drconj}[1944]
\mylabel{conj:DRC}
For all nonnegative integers $n$,
\begin{align}
N(0,5,5n+4) &= N(1,5,5n+4) = \cdots  = N(4,5,5n+4) = \tfrac{1}{5}p(5n + 4),
\mylabel{eq:Dysonconj5}\\
N(0,7,7n+5) &= N(1,7,7n+5) = \cdots  = N(6,7,7n+5) = \tfrac{1}{7}p(7n + 5).    
\mylabel{eq:Dysonconj7}
\end{align}
\end{drconj}
The corresponding conjecture with modulus $11$ is definitely false.
Towards the end of his article, Dyson conjectured that there is a hypothetical
statistic he dubbed the \textit{crank}, whose residue mod $11$ should divide
the partitions of $11n+6$ into eleven classes thus explaining \myeqn{ramcong11}.
The later discovery of the crank is another story \cite{An-Ga88}.

The Dyson Rank Conjectures \myeqn{Dysonconj5} and \myeqn{Dysonconj7}
were first proved by Atkin and Swinnerton-Dyer \cite{At-SwD} in 1954.
Atkin and Swinnerton-Dyer's proof involved proving many theta-function
and generalized Lambert  (or Appell-Lerch) series identities using basically 
elliptic
function techniques. Their method also involved finding identities
for the generating functions of $N(k,5,5n+r)$ for all the residues
$r=0$, $1$, $2$, $3$ and $4$.  
We quote from their paper \cite[p.84]{At-SwD}:
\begin{quote}
It is noteworthy that we have to obtain at the same time all the results
stated in these theorems--- we cannot simplify the working so as to 
merely to obtain Dyson's identities.
\end{quote}

Only recently have new methods for approaching the
Dyson Rank Conjectures been found. In 2017 Hickerson and Mortenson 
\cite{Hi-Mo2017} used their
theory of Appell-Lerch series to obtain results for the Dyson rank function
including the Dyson Rank Conjectures. In 2019 the author \cite{Ga19a}
showed how the theory of harmonic Maass forms can be used to prove
the Dyson Rank Conjectures and much more.
In this paper we describe a new and more elementary method for proving 
Dyson's Rank Conjectures.
The method only relies on identities for Hecke-Rogers series.   
We describe these series identities in the next section.
We are able to obtain \myeqn{Dysonconj5} for partitions of $5n+4$ without
having to deal with the partitions of $5n+r$ for the other residues $r=0$,
$1$, $2$ and $3$. 

\section{Hecke-Rogers series}                          
\mylabel{sec:HRS}
The main theorem of this section is Theorem \mythm{rankids}, which contains
four two-variable generalized Hecke-Rogers series identities for
the Dyson rank generating function. 
Only two of these identities
are needed to prove Dyson's mod $5$ rank conjecture \myeqn{Dysonconj5}.
Regarding the other two identities, we will use one identity to obtain
Ramanujan's $5$-dissection rank identity from the Lost Notebook
\cite[p.20]{Ra1988}. The last identity is connected 
with Dyson's mod $7$ rank conjecture.

One of these Hecke-Rogers identities is known. Two follow
from a general result of Bradley-Thrush \cite{BrTh2020}. Our proof of
the remaining identity uses results of Hickerson and Mortenson \cite{Hi-Mo14}
for Appell-Lerch series.

Following \cite[p.84]{An1984} a \textit{Hecke-Rogers series} has the form
$$
\sum_{(n,m)\in D} (\pm 1)^{f(n,m)} q^{Q(n,m) + L(n,m)}
$$
where $Q$ is an indefinite binary quadratic form, $L$ is a linear form
and $D$ is a subset of $\mathbb{Z}^2$ for which $Q(n,m)\ge0$.
L.~J.~Rogers \cite[p.323]{Ro1894} found 
\beq
\prod_{n=1}^\infty (1 - q^n)^2 = 
\sum_{n=-\infty}^\infty \sum_{m=-\FL{n/2}}^{\FL{n/2}}               
(-1)^{n+m} q^{ (n^2-3m^2)/2 + (n+m)/2}
\mylabel{eq:HReta2}
\eeq
The first systematic study of such series was done by Hecke \cite{He1925}
who independently obtained Rogers identity.
Identities of this type arose in Kac and Petersen's \cite{Ka-Pe1980} work
on character formulas for infinite dimensional Lie algebras and string
functions. 
Andrews \cite{An1984} showed how identities of this type can be
derived using his constant term method. 

We have the following two-variable generating function for the rank
\cite[Eq.(7.2),p.66]{Ga88b}:
Then
\begin{equation}
R(z;q) := \sum_{n=0}^\infty\sum_{m} N(m,n) z^m q^n 
          = \sum_{n=0}^\infty \frac{q^{n^2}}{(zq;q)_n(z^{-1}q;q)_n}
          = \frac{(1-z)}{(q)_\infty} \sum_{n=-\infty}^\infty 
\frac{(-1)^n q^{n(3n+1)/2}}{(1-zq^n)},
\mylabel{eq:NRid}
\end{equation}
where the last equality follows easily from \cite[Eq.(7.10),p.68]{Ga88b}.
Here we are using the usual $q$-notation:
\begin{align*}
(a)_n &= (a;q)_n := (1-a) (1 - aq) (1 - aq^2) \cdots (1 - aq^{n-1}),\\
(a)_\infty &=(a;q)_\infty = 
\lim_{n\to\infty} (a;q)_n = \prod_{n=1}^\infty (1-aq^{n-1}),
\end{align*}
provided $\abs{q}<1$,
and recalling that $N(m,n)$ is the number of partitions of $n$ with rank $m$.
We will often use the
Jacobi triple product identity \cite[Theorem 3.4, p.461]{An1974} for
the theta-function $j(z;q)$:
\beq
j(z;q) := (z;q)_\infty (z^{-1}q;q)_\infty (q;q)_\infty
= \sum_{n=-\infty}^\infty (-1)^n z^n q^{n(n-1)/2}.
\label{eq:jacp}
\eeq
Also we use the following notation for theta-type $q$-products:
\beq
J_{b,a}:=j(q^a;q^b),\quad\mbox{and}\quad J_b:=(q^b;q^b)_\infty.
\mylabel{eq:Jba}
\eeq            
We need the following general result of Bradley-Thrush \cite{BrTh2020}.
\begin{theorem}[{Bradley-Thrush \cite[Theorem 7.5]{BrTh2020}}]
\mylabel{thm:jbt}
Let $p$, $q$, $x$ and $y$ be non-zero complex numbers such that
$\abs{p}$, $\abs{q}<1$ and let $k$ be a positive integer such that
$\abs{pq^{-k^2}}<1$. Then
\beq
j(y;q) \sum_{n=-\infty}^\infty \frac{(-1)^n p^{n(n+1)/2} x^n}{1-yq^{kn}} =
\sum_{m=-\infty}^\infty \sum_{n=0}^\infty (-1)^{m+n}
q^{ (n-m)(n-m+1)/2 } j(q^{-kn} x^{-1};p) y^m.
\mylabel{eq:genjbtid}
\eeq
\end{theorem}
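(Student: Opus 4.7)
The plan is to prove \eqref{eq:genjbtid} by first collapsing the right-hand side via the Jacobi triple product \eqref{eq:jacp} to extract the common factor $j(y;q)$, then verifying the resulting reduced identity by expanding the remaining theta factor and summing a geometric series in $n$.

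For the reduction, I would switch the order of summation on the right-hand side so that the inner sum is over $m\in\mathbb{Z}$ for fixed $n$. Substituting $r=m-n$ gives $(n-m)(n-m+1)/2=r(r-1)/2$ and $(-1)^m=(-1)^{n+r}$, so by \eqref{eq:jacp}
$$\sum_{m\in\mathbb{Z}}(-1)^m q^{(n-m)(n-m+1)/2} y^m \;=\; (-1)^n y^n \sum_{r\in\mathbb{Z}}(-1)^r q^{r(r-1)/2} y^r \;=\; (-1)^n y^n\, j(y;q).$$
The factor $(-1)^n y^n$ combines with the outer $(-1)^n$, and $j(y;q)$ pulls outside the $n$-sum, reducing \eqref{eq:genjbtid} to the cleaner statement
$$\sum_{n=-\infty}^{\infty}\frac{(-1)^n p^{n(n+1)/2} x^n}{1-yq^{kn}} \;=\; \sum_{n=0}^{\infty} y^n\, j(q^{-kn} x^{-1}; p).$$

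To establish this reduced identity, I would apply \eqref{eq:jacp} to the theta factor on the right,
$$j(q^{-kn}x^{-1};p) \;=\; \sum_{\ell\in\mathbb{Z}}(-1)^\ell p^{\ell(\ell-1)/2} q^{-kn\ell} x^{-\ell},$$
swap the $n$- and $\ell$-sums so that $\ell$ is outermost, and for each $\ell$ evaluate the geometric series $\sum_{n\geq 0}(yq^{-k\ell})^n=(1-yq^{-k\ell})^{-1}$. Reindexing $\ell\mapsto -n$ and using $(-1)^{-n}=(-1)^n$, $(-n)(-n-1)/2=n(n+1)/2$, and $1-yq^{-k(-n)}=1-yq^{kn}$, yields term by term the left-hand side.

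The principal obstacle is the analytic justification of the two interchanges of summation. Neither iterated double sum is absolutely convergent in an obvious way; indeed, the theta factor $j(q^{-kn}x^{-1};p)$ grows like $\exp(cn^2)$ as $n\to\infty$, with $c>0$ depending on $|p|$ and $|q|$. The hypothesis $|pq^{-k^2}|<1$ is precisely calibrated to tame this growth: after combining the offending $q^{-kn\ell}$ arising from the Jacobi triple product expansion of $j(q^{-kn}x^{-1};p)$ with the Gaussian factor $p^{\ell(\ell-1)/2}$, one can complete the square so that the net exponent in $\ell$ is strictly negative uniformly in $n$, yielding an absolute majorant and enabling Fubini. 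Establishing this majorant bound is the technical substance of Bradley-Thrush's proof; once the two swaps are justified, the identity reduces to the formal computation sketched above.
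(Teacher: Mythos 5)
This theorem is not proved in the paper: it is quoted verbatim from Bradley-Thrush \cite[Theorem 7.5]{BrTh2020}, so there is no in-paper argument to compare your route against. Judged on its own terms, your proposal has a genuine gap, and it sits exactly where you place the ``technical substance'': the two interchanges of summation are not merely in need of justification --- they are invalid, and the intermediate identity you reduce to is false. The double sum on the right of \eqref{eq:genjbtid} converges only in the stated order ($n$ summed first for each fixed $m$): the hypothesis $\abs{pq^{-k^2}}<1$ makes $q^{(n-m)(n-m+1)/2}\,j(q^{-kn}x^{-1};p)$ decay like a Gaussian in $n$ for fixed $m$, but once you collapse the $m$-sum to $y^n j(y;q)$, the surviving series $\sum_{n\ge0} y^n j(q^{-kn}x^{-1};p)$ diverges, because $\abs{j(q^{-kn}x^{-1};p)}$ grows like $\exp(cn^2)$ with $c=\tfrac{k^2(\log\abs{q})^2}{2\log\abs{p^{-1}}}>0$ while $\abs{y}^n$ is only exponential. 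There is no absolute majorant either: writing $\abs{p}=\abs{q}^{\alpha}$ with $\alpha>k^2$, the exponent $(n-m)(n-m+1)/2-kn\ell+\alpha\ell(\ell-1)/2$ of $\abs{q}$ in the full triple sum tends to $-\infty$ along $m=n$, $\ell\approx kn/\alpha$, so Fubini cannot be invoked no matter how the hypothesis is deployed.

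Two independent symptoms confirm that your reduced statement is unsalvageable rather than merely delicate. First, its left side $\sum_{\ell}(-1)^{\ell}p^{\ell(\ell+1)/2}x^{\ell}/(1-yq^{k\ell})$ is meromorphic in $y$ with simple poles of nonzero residue at $y=q^{-k\ell}$ for every $\ell\in\mathbb{Z}$, hence with poles accumulating at $y=0$; it cannot equal a power series $\sum_{n\ge0}c_n y^n$ on any punctured neighbourhood of $0$. Second, in your final step the geometric series $\sum_{n\ge0}(yq^{-k\ell})^n$ diverges for all but finitely many $\ell>0$, since $\abs{yq^{-k\ell}}>1$ there. A workable proof has to preserve the conditionally convergent wedge structure rather than erase it --- for instance, by the standard Appell--Lerch device of expanding $1/(1-yq^{kn})$ as $\sum_{s\ge0}(yq^{kn})^s$ or as $-\sum_{s\ge1}(yq^{kn})^{-s}$ according to the size of $\abs{yq^{kn}}$, which is precisely what generates the restriction $n\ge0$ in Hecke--Rogers-type double sums (compare the $\sg(r)=\sg(s)$ condition in \eqref{eq:fabcdef}); alternatively one can characterize both sides as the unique holomorphic function of $y$ with prescribed quasi-periodicity under $y\mapsto qy$. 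As written, your argument proves only a formal power-series shadow of the identity and would equally well ``prove'' false statements.
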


We also need some notation and results of Hickerson and Mortenson \cite{Hi-Mo14},
\cite{Hi-Mo2017}. First we give Hickerson and Mortenson's definitions
of their functions $f_{a,b,c}(x,y,q)$, $m(x,q,z)$, $g_{a,b,c}(x,y,q,z_1,z_0)$
and $g(x,q)$:
\begin{equation}
f_{a,b,c}(x,y,q):=\sum_{\substack{\sg (r)=\sg(s)}} \sg(r)(-1)^{r+s}x^ry^sq^{a\binom{r}{2}+brs+c\binom{s}{2}}.
\mylabel{eq:fabcdef}
\end{equation}
where $\sg (r):=1$ for $r\ge 0$ and $\sg(r):=-1$ for $r<0$.
\begin{equation}
m(x,q,z):=\frac{1}{j(z;q)}\sum_{r=-\infty}^{\infty}\frac{(-1)^rq^{\binom{r}{2}}z^r}{1-q^{r-1}xz}.
\label{eq:mxqzdef}
\end{equation}
\begin{align}
&g_{a,b,c}(x,y,q,z_1,z_0)\nonumber\\
&\quad :=\sum_{t=0}^{a-1}
     (-y)^tq^{c\binom{t}{2}} j(q^{bt}x;q^a)
  \,m\Parans{-q^{a\binom{b+1}{2}-c\binom{a+1}{2}-t(b^2-ac)}\frac{(-y)^a}{(-x)^b},
  q^{a(b^2-ac)},z_0}
\label{eq:mdef}\\
&\qquad +\sum_{t=0}^{c-1} (-x)^t q^{a\binom{t}{2}} j(q^{bt}y;q^c)
\,m\Parans{ -q^{c\binom{b+1}{2}-a\binom{c+1}{2}-t(b^2-ac)}\frac{(-x)^c}{(-y)^b},
         q^{c(b^2-ac)},z_1}.
\nonumber 
\end{align}
\begin{equation}
g(x,q):=x^{-1}\Parans{ -1+\sum_{n= 0}^{\infty}
\frac{q^{n^2}}
{(x;q)_{n+1}(q/x;q)_n}}.
\mylabel{eq:gdef}
\end{equation}
From \myeqn{NRid} we see that $g(x,q)$ is related to the rank function $R(z;q)$
by 
$$
g(x,q) = x^{-1}\Parans{-1 + \frac{1}{1-x} R(x;q)},
$$
and
\beq
R(z;q) = (1-z) \Parans{ 1 + z g(z,q)}.
\mylabel{eq:Rzqgzqid}
\eeq
The function $g(z,q)$ is related to the $m$-function by
\beq
g(z,q) = - z^{-2} m(z^{-3}q,q^3,z^2) - z^{-1} m(z^{-3}q^2,q^3,z^2).
\mylabel{eq:gzqmid}
\eeq
See \cite[Eq.(26a)]{Hi-Mo2017}.

We need 
\begin{theorem}[{Hickerson and Mortenson \cite[Theorem 1.6]{Hi-Mo14}}]
\mylabel{thm:HiMoThm1p6}
Let $n$ be a positive integer. For generic $x$, $y \in \mathbb{C}^{*}$
$$
f_{n,n+1,n}(x,y,q) = 
g_{n,n+1,n}(x,y,q,y^n/x^2,x^n/y^n).
$$
\end{theorem}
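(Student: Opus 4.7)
The plan is to exploit the symmetry of the quadratic form $n\binom{r}{2} + (n+1)rs + n\binom{s}{2}$, which rewrites as $n\binom{r+s}{2}+rs$, to convert the double Hecke-type sum on the left-hand side into a combination of theta and Appell--Lerch terms matching the right-hand side.

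First I would stratify the summation by setting $k = r+s$ and summing first over $r$ with $s = k-r$. The constraint $\sg(r)=\sg(s)$ then partitions the sum into a \emph{positive cone} $0 \le r \le k$ (when $k \ge 0$) and a \emph{negative cone} $k+1 \le r \le -1$ (when $k \le -2$). Inside each cone I would further split $r$ modulo $n$. The inner sums over a fixed residue class telescope via the Jacobi triple product \eqref{eq:jacp} into finite combinations of theta products of the shape $j(q^{bt}x;q^a)$ and $j(q^{bt}y;q^c)$, which are exactly the prefactors appearing in the definition \eqref{eq:mdef} of $g_{n,n+1,n}$. The remaining outer sum over $k$ assembles into Lerch-type sums of the form $\sum_r (-1)^r q^{\binom{r}{2}} z^r/(1 - q^{r-1}xz)$, which by \eqref{eq:mxqzdef} is precisely $j(z;q)\,m(x,q,z)$. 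Matching the arguments $(x,q,z)$ of the resulting $m$-factors against \eqref{eq:mdef} forces the shift parameters to be exactly $z_0 = y^n/x^2$ and $z_1 = x^n/y^n$.

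The main obstacle, I expect, is the bookkeeping between the cone-stratification and the residue-class decomposition: one must check that the ``boundary'' contributions from small $|k|$ contribute no extra theta residue, and that with the specific choices $z_0=y^n/x^2$, $z_1=x^n/y^n$ the apparent poles of the two $m$-functions on the right are in fact removable. A perhaps cleaner alternative is to form the difference $E(x,y) := f_{n,n+1,n}(x,y,q) - g_{n,n+1,n}(x,y,q,y^n/x^2,x^n/y^n)$ and argue as follows: first, verify that these specific shifts make the poles of the two $m$-factors cancel against the zeros of the accompanying $j$-prefactors, so that $E$ is entire in $x$; second, compare the $q$-shift behavior under $x\mapsto qx$ on both sides (which on the left follows from a direct reindexing in the definition \eqref{eq:fabcdef} and on the right from the quasi-periodicity $j(qz;q) = -z^{-1}j(z;q)$ together with the known functional equation $m(qx,q,z) = 1 - xm(x,q,z)$), and check that they agree; third, conclude by the standard argument that an entire function of $x$ obeying the appropriate theta-quasi-periodicity with matching normalization must be identically zero. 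This bypasses the most painful parts of the explicit Hecke-type manipulation.
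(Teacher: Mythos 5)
First, a point of comparison: the paper does not prove Theorem \ref{thm:HiMoThm1p6} at all --- it is imported verbatim from Hickerson and Mortenson \cite[Theorem 1.6]{Hi-Mo14} and used as a black box, so there is no in-paper argument to measure your sketch against. Judged on its own terms, your sketch identifies the right structural features (the rewriting $n\binom{r}{2}+(n+1)rs+n\binom{s}{2}=n\binom{r+s}{2}+rs$, the cone decomposition forced by $\sg(r)=\sg(s)$, the residue-class split producing the $j(q^{bt}x;q^a)$ prefactors of \eqref{eq:mdef}), but both of your proposed routes have a genuine gap at the same place: neither explains why the identity holds \emph{with no theta-quotient error term}. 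In your first route, the claim that the inner sums ``telescope via the Jacobi triple product into finite combinations of theta products'' cannot be right as stated: for fixed $k=r+s$ the inner sum over $0\le r\le k$ is a finite sum, not a bilateral theta series, so \eqref{eq:jacp} does not apply. The Appell--Lerch denominators $1/(1-q^{r-1}xz)$ in \eqref{eq:mxqzdef} arise from resumming a geometric series along one cone direction, and when one carries this out for general $(a,b,c)$ and general $z_0,z_1$ one obtains $g_{a,b,c}$ \emph{plus} an explicit quotient of theta functions. The entire content of \cite[Theorem 1.6]{Hi-Mo14} is that for $(a,b,c)=(n,n+1,n)$ and these particular specializations of $z_0,z_1$ that correction vanishes --- a nontrivial three-term theta identity of the Weierstrass type \eqref{eq:weier}, of exactly the kind the paper must prove separately in Lemma \ref{lem:jzqm} for its downstream application. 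Your sketch never confronts this vanishing, which is where essentially all the work lies. (A small symptom that the computation was not carried through: you have interchanged $z_0$ and $z_1$ relative to the argument order $g_{a,b,c}(x,y,q,z_1,z_0)$ in \eqref{eq:mdef}.)

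Your alternative route has a more concrete failure. The shift $x\mapsto qx$ does not produce a usable functional equation for $f_{n,n+1,n}$: from \eqref{eq:fabcdef}, reindexing $r\mapsto r-1$ implements $x\mapsto q^{n}x$ \emph{coupled with} a compensating shift of $y$ by $q^{-(n+1)}$, and the mismatch of the condition $\sg(r)=\sg(s)$ at the boundary $r=0$ throws off an inhomogeneous one-variable theta term rather than a clean multiplier. On the other side, \eqref{eq:mid1c} governs $m(qx,q,z)$, but the $m$-functions occurring in $g_{n,n+1,n}$ have base $q^{n(2n+1)}$ and first arguments that are monomials in $x^{n+1}/y^{n}$ and $y^{n+1}/x^{n}$, so a shift $x\mapsto qx$ does not act on them through \eqref{eq:mid1c} at all. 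Even if you work with the correct coupled shift, you must verify that the inhomogeneous theta terms produced on the two sides agree, and then the conclusion requires more than ``matching normalization'': you need a pole/zero count (or evaluation at sufficiently many points in a fundamental annulus) for a two-variable quasi-periodic function, with the other variable held generic. None of this is fatal in principle --- it is close to how such identities are actually established --- but as written the argument would not go through, and the decisive step (the vanishing of the theta correction for the special $z_0,z_1$) is missing from both versions.
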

\begin{remark}
By generic, Hickerson and Mortenson mean values that do not cause singularities
at poles
of Appell-Lerch series or quotients of theta-functions.
Also $\mathbb{C}^{*}$ is the set of non-zero complex numbers.
\end{remark}
We need some well-known properties of Appell-Lerch series 
\cite[Proposition 3.1]{Hi-Mo2017}
\begin{align}
m(x,q,z) &= m(x,q,qz),
\mylabel{eq:mid1a}\\
m(x,q,z) &= x^{-1} m(x^{-1},q,z^{-1}),
\mylabel{eq:mid1b}\\
m(qx,q,z) &= 1- xm(x,q,z),
\mylabel{eq:mid1c}\\
m(x,q,z_1) - m(x,q,z_0)
&= \frac{ z_0 J_1^3 j(z_1/z_0;q) j(x z_0 z_1;q) }
       {j(z_0;q) j(z_1;q) j(x z_0;q) j(xz_1;q)}
\mylabel{eq:mid1d},
\end{align}
for generic $x$, $z$, $z_0$, $z_1 \in \mathbb{C}^{*}$.
The following is a well-known three term theta-function identity
\begin{align}
&j( d;q ) j( {{b}{c}^{-1}};q ) j( abc;q) j( ad;q ) -j( b;q ) 
j( {d}{c}^{-1};q ) j( acd;q ) j( ab;q ) 
\mylabel{eq:weier}\\
&\qquad +
{b}{c}^{-1}j( c;q ) j( abd;q ) j( ac;q ) j( {d}{b}^{-1};q ) = 0.
\nonumber
\end{align}
See \cite[Theorem 4.1]{BrTh2020}.

We will also need the following lemma.
\begin{lemma}
\mylabel{lem:jzqm}
\begin{align}
&  z j(z^{-1}q;q) m(q,q^3,z) + z j(z^{-2}q;q) m(z^3q,q^3,z^{-1}) 
\nonumber\\
& \quad + z^2 j(zq;q) m(q,q^3,z^{-1}) + z^2 j(z^{2}q;q) m(z^{-3}q,q^3,z)
\mylabel{eq:jzqmid}\\
&= - j(z^2;q) m(z^{-3}q,q^3,z^2) + z j(z^2;q) m(z^3q,q^3,z^{-2}).
\nonumber
\end{align}
\end{lemma}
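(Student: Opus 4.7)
The plan is to reduce \myeqn{jzqmid} to a pure theta-function identity by grouping the six Appell-Lerch sums $m(\cdot,q^{3},\cdot)$ into three pairs sharing the same first and second arguments: $(m(q,q^{3},z),\,m(q,q^{3},z^{-1}))$, $(m(z^{-3}q,q^{3},z),\,m(z^{-3}q,q^{3},z^{2}))$, and $(m(z^{3}q,q^{3},z^{-1}),\,m(z^{3}q,q^{3},z^{-2}))$. For each pair I would apply the change-of-$z$ formula \myeqn{mid1d}, with $q$ replaced by $q^{3}$, so that the difference is expressed as an explicit ratio of base-$q^{3}$ thetas.

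The first key simplification: after moving the right-hand side of \myeqn{jzqmid} to the left and collecting by common first argument of $m$, the six base-$q$ theta coefficients collapse. Using the elementary identities $j(z^{-1}q;q)=j(z;q)$, $j(zq;q)=-z^{-1}j(z;q)$, $j(z^{-2}q;q)=j(z^{2};q)$, and $j(z^{2}q;q)=-z^{-2}j(z^{2};q)$, the difference $\text{LHS}-\text{RHS}$ becomes
\begin{align*}
&z\,j(z;q)\bigl[m(q,q^{3},z)-m(q,q^{3},z^{-1})\bigr] \\
&\qquad -j(z^{2};q)\bigl[m(z^{-3}q,q^{3},z)-m(z^{-3}q,q^{3},z^{2})\bigr] \\
&\qquad +z\,j(z^{2};q)\bigl[m(z^{3}q,q^{3},z^{-1})-m(z^{3}q,q^{3},z^{-2})\bigr],
\end{align*}
so each bracketed difference is of the form $m(x,q^{3},z_{1})-m(x,q^{3},z_{0})$ and is evaluated directly by \myeqn{mid1d}.

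Substituting these evaluations and $3$-dissecting the surviving base-$q$ thetas via the standard identity $j(z;q)=J_{1}\,j(z;q^{3})\,j(zq;q^{3})\,j(zq^{2};q^{3})/J_{3}^{3}$, the entire identity becomes homogeneous in base-$q^{3}$ thetas; after cancellation it reduces to
\begin{equation*}
j(z;q^{3})\,j(zq;q^{3})\,j(z^{2}q;q^{3})+z\,j(z;q^{3})\,j(zq^{2};q^{3})\,j(z^{2}q^{2};q^{3})=j(z^{2};q^{3})\,j(zq;q^{3})\,j(zq^{2};q^{3}).
\end{equation*}
This is a direct consequence of the Weierstrass three-term theta identity \myeqn{weier} at base $q^{3}$ with, for example, $a=z$, $b=zq$, $c=q^{2}$, $d=q$, together with the reflection $j(q^{2};q^{3})=j(q;q^{3})$ (a special case of $j(w;q^{3})=j(q^{3}/w;q^{3})$).

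The main obstacle is the careful bookkeeping in the application of \myeqn{mid1d}: each of the three differences produces a four-factor theta denominator and a two-factor numerator at base $q^{3}$, and one must handle signs and the inversion identity $j(w^{-1};q^{3})=-w^{-1}j(w;q^{3})$ consistently across all three pairs before the $3$-dissection step can unify everything into the single Weierstrass relation. Once this cancellation is checked the remaining verification is mechanical.
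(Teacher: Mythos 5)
Your proposal is correct and follows essentially the same route as the paper's proof: the same pairing of the six Appell--Lerch sums by common first argument, the same three applications of \myeqn{mid1d}, and the same reduction to a three-term base-$q^{3}$ theta identity that is a special case of \myeqn{weier}. The intermediate collapse of the base-$q$ theta coefficients and the final Weierstrass specialization both check out.
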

\begin{proof}
We apply \myeqn{mid1d} three times to obtain
\begin{align*}
& z j(z^{-1}q;q) m(q,q^3,z) +  z^2 j(zq;q) m(q,q^3,z^{-1}) \\
&= z^2 j(zq;q) \left( m(q,q^3,z^{-1}) -  m(q,q^3,z)\right) \\
& = \frac{z^3 j(z^{-2};q^3) j(q;q^3) j(zq;q)}
         {j(z;q^3) j(z^{-1};q^3) j(zq;q^3) j(z^{-1}q;q^3)},
\end{align*}
\begin{align*}
& z j(z^{-2}q;q) m(z^3q,q^3,z^{-1}) - z j(z^2;q) m(z^3q,q^3,z^{-2})\\
&= z j(z^{2}q,q) \left(m(z^3q;q^3,z^{-1}) -  m(z^3q,q^3,z^{-2})\right)\\
& = \frac{j(z^{2};q^3) j(z;q^3) j(q;q^3)}
         {z j(z^{-2},q^3) j(z^{-1};q^3) j(zq;q^3) j(z^{2}q;q^3)},
\end{align*}
\begin{align*}
& j(z^2;q) m(z^{-3}q,q^3,z^2) + z^2 j(z^{2}q;q) m(z^{-3}q,q^3,z)\\
&= j(z^2,q) \left(m(z^{-3}q,q^3,z^2) - m(z^{-3}q,q^3,z)\right)\\
& = \frac{z j(z^{2};q) j(q,q^3)}
         {j(z^{2};q^3) j(z^{-2}q;q^3) j(z^{-1}q;q^3)}.
\end{align*}
Thus we see that \myeqn{jzqmid} is equivalent to showing that
\begin{align*}
& \frac{z^3 j(z^{-2};q^3) j(q;q^3) j(zq;q)}
         {j(z;q^3) j(z^{-1};q^3) j(zq;q^3) j(z^{-1}q;q^3)}\\
& + \frac{j(z^{2};q^3) j(z;q^3) j(q;q^3)}
         {z j(z^{-2};q^3) j(z^{-1};q^3) j(zq;q^3) j(z^{2}q;q^3)}\\
& + \frac{j(z^{2};q) j(q;q^3)}
         {j(z^{2};q^3) j(z^{-2}q;q^3) j(z^{-1}q;q^3)}=0.
\end{align*}
By rewriting each function on base $q^3$ we find that this identity
is equivalent to
$$
z j(zq^2;q^3) j(z^2q^2;q^3) j(z;q^3) -
j(zq^2;q^3) j(zq;q^3) j(z^2;q^3) + j(z;q^3) j(zq;q^3) j(z^2q;q^3) = 0.
$$
This identity is a special case of \myeqn{weier}. This completes the proof of
\myeqn{jzqmid}.
\end{proof}

\begin{theorem}
\mylabel{thm:rankids}
\begin{align}
&(zq)_\infty (z^{-1}q)_\infty (q)_\infty R(z;q) 
\mylabel{eq:rankid1}\\
&\quad = \frac{1}{2}\sum_{n=0}^\infty 
\left( \sum_{j=0}^{[n/2]} (-1)^{n+j}
(z^{n-3j}  + z^{3j-n}) q^{\frac{1}{2}(n^2-3j^2) + \frac{1}{2}(n-j)}\right.
\nonumber\\
& \hskip 2in  + \left. \sum_{j=1}^{[n/2]} (-1)^{n+j}
(z^{n-3j+1}  + z^{3j-n-1}) q^{\frac{1}{2}
(n^2-3j^2) + \frac{1}{2}(n+j)}\right),
\nonumber\\
&(zq)_\infty (z^{-1}q)_\infty (q)_\infty R(z;q^2) 
 = \sum_{n=0}^\infty 
 \sum_{j=-n}^{n} (-1)^{j}
    z^j q^{\frac{1}{2}n(3n+1) -j^2}(1- q^{2n+1})
\mylabel{eq:rankid2}\\
&(1+z)(z^2q;q)_\infty (z^{-2}q;q)_\infty (q;q)_\infty R(z;q) 
 = \sum_{n=0}^\infty 
   \sum_{j=-\lfloor n/2 \rfloor}^{\lfloor n/2\rfloor} (-1)^{n+j}
(z^{n+1}  + z^{-n}) q^{\frac{1}{2}(n^2-3j^2) + \frac{1}{2}(n-j)}
\mylabel{eq:rankid3}\\
&(1+z)(z^2q^2;q^2)_\infty (z^{-2}q^2;q^2)_\infty (q^2;q^2)_\infty R(z;q) 
\mylabel{eq:rankid4}\\
&\quad =
\sum_{n=0}^\infty 
   \sum_{j=0}^{2n} (-1)^{n}
(z^{j+1}  + z^{-j}) q^{3n^2+2n -\frac{1}{2}j(j+1)}
 - \sum_{n=1}^\infty 
   \sum_{j=0}^{2n-2} (-1)^{n}
(z^{j+1}  + z^{-j}) q^{3n^2-2n -\frac{1}{2}j(j+1)}
\nonumber
\end{align}
\end{theorem}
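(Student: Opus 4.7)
My plan is to prove all four identities from a single template. Start from the Atkin-Swinnerton-Dyer Lambert series
$$R(z;q) = \frac{1-z}{(q)_\infty}\sum_{n=-\infty}^\infty \frac{(-1)^n q^{n(3n+1)/2}}{1-zq^n}$$
of \eqn{NRid}, and in each identity use \eqn{jacp}, in the form $(1-z)(zq;q)_\infty(z^{-1}q;q)_\infty(q;q)_\infty = j(z;q)$ or its analogue with $z\to z^2$ and $q\to q^2$, to absorb the factor $(1-z)$ together with the $q$-product on the left into a single Jacobi triple product. Up to an elementary prefactor, the left-hand side then matches the left-hand side of Bradley-Thrush (Theorem \thm{jbt}), namely
$$j(y;Q)\sum_{n=-\infty}^\infty \frac{(-1)^n p^{n(n+1)/2}x^n}{1-yq^{kn}},$$
with parameters as follows: $y=z$, $Q=q$, $k=1$, $p=q^3$, $x=q^{-1}$ for \eqn{rankid1}; $y=z$, $Q=q$, $k=2$, $p=q^6$, $x=q^{-2}$ for \eqn{rankid2} (since $R$ is evaluated at $q^2$); and for \eqn{rankid3}, \eqn{rankid4} the combination $(1+z)(z^2q^a;q^a)_\infty(z^{-2}q^a;q^a)_\infty(q^a;q^a)_\infty$ assembles into $j(z^2;q^a)/(1-z)$ with $a=1,2$, so after the preliminary partial fraction $\frac{1}{1-zq^n}=\frac{1+zq^n}{1-z^2q^{2n}}$ one is reduced to two Bradley-Thrush applications with $y=z^2$ and $Q=q^a$.

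For the three identities accessible to Bradley-Thrush, I substitute the parameters into \eqn{genjbtid}, expand the inner theta $j(q^{-kn}x^{-1};p)$ via \eqn{jacp}, and reindex the resulting triple sum. Pairing the inner-theta summation index with its negative produces the $z^m+z^{-m}$-type symmetrizations $z^{n-3j}+z^{3j-n}$ and $z^{j+1}+z^{-j}$ on the right-hand sides. In each case the inner theta evaluates to $(\pm q^{\textrm{monomial}})\cdot(q;q)_\infty$ or $(\pm q^{\textrm{monomial}})\cdot(q^2;q^2)_\infty$, the eta-product cancelling the $(q)_\infty^{-1}$ coming from \eqn{NRid}. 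For \eqn{rankid2} specifically, $j(q^{-2n+2};q^6)$ vanishes when $n\equiv 1\pmod 3$ (its argument is then a nonzero integer power of the base), and the contributions from $n\equiv 0$ and $n\equiv 2\pmod 3$ combine, after an index shift, to produce the telescoping factor $(1-q^{2n+1})$ on the right-hand side. One of the four identities is already in the literature and I would simply cite it at that point.

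The remaining identity, the one not yielded by Bradley-Thrush, is handled by the Hickerson-Mortenson machinery. Using \eqn{Rzqgzqid} and \eqn{gzqmid}, I rewrite $R(z;q)$ as $(1-z)$ times a combination of the Appell-Lerch $m$-functions $m(z^{-3}q,q^3,z^2)$ and $m(z^{-3}q^2,q^3,z^2)$, then apply Lemma \lem{jzqm} and the $m$-function identities \eqn{mid1a}--\eqn{mid1d} to reorganise this combination into the right-hand side of Theorem \thm{HiMoThm1p6} specialised to $n=2$ for an appropriate choice of $(x,y)$, so that the underlying $f$-series is $f_{2,3,2}(x,y,q)$. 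The three-term theta identity \eqn{weier} is invoked to clear the leftover theta quotients. Reading off the expansion of $f_{2,3,2}(x,y,q)$ from its definition \eqn{fabcdef} produces the stated Hecke-Rogers double sum, with the split into ``$n\ge 0$'' and ``$n\ge 1$'' pieces in \eqn{rankid4} reflecting the sign condition $\sg(r)=\sg(s)$ in \eqn{fabcdef}. The main obstacle throughout is bookkeeping: matching the explicit summation ranges stated in the theorem---$|j|\le\lfloor n/2\rfloor$ in \eqn{rankid1}, $-n\le j\le n$ in \eqn{rankid2}, $0\le j\le 2n$ and $0\le j\le 2n-2$ in \eqn{rankid4}---to what is produced by Bradley-Thrush or by $f_{n,n+1,n}$ requires careful changes of variable, a symmetrization $(n,j)\leftrightarrow(n,-j)$, and consistent tracking of signs and exponent shifts.
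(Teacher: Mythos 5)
Your template breaks down exactly where the paper has to change methods, namely at \eqn{rankid3}. To reach the denominator $1-z^2q^{2n}$ together with the base-$q$ theta $j(z^2;q)$ that \eqn{rankid3} requires, you must take $y=z^2$, $k=2$ and $p=q^3$ in Theorem \thm{jbt}; but then the hypothesis $\abs{pq^{-k^2}}<1$ reads $\abs{q^{-1}}<1$, which is false, and the alternative $k=1$ forces the base to be $q^2$, which lands you on \eqn{rankid4} instead. So of your ``three identities accessible to Bradley--Thrush,'' only \eqn{rankid2} and \eqn{rankid4} genuinely are (and for those two your parameters agree with the paper's: $k=2$, $p=q^6$, $x=q^{-2}$, $y=z$ for \eqn{rankid2}, and two applications with $y=z^2$ on base $q^2$ for \eqn{rankid4}); identity \eqn{rankid1} is the one the paper simply cites from the earlier Hecke--Rogers paper, and \eqn{rankid3} is the one that needs the Hickerson--Mortenson machinery.

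Your fallback is also misaimed. You propose Theorem \thm{HiMoThm1p6} with $n=2$, i.e.\ $f_{2,3,2}$, for \eqn{rankid4}; but the quadratic part of the exponent in $f_{2,3,2}$ is $r^2+3rs+s^2$, of discriminant $5$, while the form $3n^2-\tfrac12 j(j+1)$ in \eqn{rankid4} corresponds to $6n^2-j^2$, of discriminant $24$; the squarefree parts $5$ and $6$ differ, so no change of variables can match them. The correct target is \eqn{rankid3} with $n=1$: the form $\binom{r}{2}+2rs+\binom{s}{2}$ of $f_{1,2,1}$ matches $\tfrac12(n^2-3j^2)+\tfrac12(n-j)$, and the symmetrized combination $f_{1,2,1}(z^{-1}q,z^{-2}q,q)+z\,f_{1,2,1}(zq,z^2q,q)$ produces the factor $z^{n+1}+z^{-n}$. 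The substantive step is then showing that $g_{1,2,1}$ at these arguments collapses to $j(z^2;q)\left(1+z\,g(z,q)\right)$; that is the content of Lemma \lem{jzqm}, which requires three applications of \eqn{mid1d} plus the three-term relation \eqn{weier}, and your sketch names these ingredients without engaging with this reduction. (Your proposed Bradley--Thrush derivation of \eqn{rankid1} with $k=1$, $p=q^3$, $x=q^{-1}$ is not obviously blocked, since the hypothesis becomes $\abs{q^2}<1$, but the paper does not take that route, and you would still need to verify that the resulting double sum reindexes into the two inner sums with linear parts $\tfrac12(n-j)$ and $\tfrac12(n+j)$.)
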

\begin{remark}
By letting $z=1$ we see that identities \myeqn{rankid1} and \myeqn{rankid3} are 
$z$-analogs  of \myeqn{HReta2}. Similarly we find that \myeqn{rankid2}
is $z$-analog of the identity \cite[Eq.(7.2),p.66]{An86}
$$
(q)_\infty^2 (q;q^2)_\infty = 
 \sum_{n=0}^\infty \sum_{j=-n}^{n} (-1)^{j}
    q^{\frac{1}{2}n(3n+1) -j^2}(1- q^{2n+1}).
$$
\end{remark}
\subsubsection*{Proof of \myeqn{rankid1}}
Equation \myeqn{rankid1} is \cite[Eq. (1.15), p.269]{Ga15a}.
For a proof see \cite[Section 3]{Ga15a}.

\subsubsection*{Proof of \myeqn{rankid2}}
We show how \myeqn{rankid2} follows from the following result
of Bradley-Thrush's Theorem \mythm{jbt}. In Equation \myeqn{genjbtid}
we let $k=2$, $p=q^6$, $x=q^{-2}$, $y=z$ and noting that 
$\abs{pq^{-k^2}}= \abs{q}^2 <1 $ we find that
$$
j(z;q) \sum_{n=-\infty}^\infty \frac{(-1)^n q^{n(3n+1)} x^n}{1-zq^{2n}} =
\sum_{m=-\infty}^\infty \sum_{n=0}^\infty (-1)^{m+n}
q^{ (n-m)(n-m+1)/2 } j(q^{-2(n-1)};q^6) z^m.
$$   
Now
$$
j(q^{-2(n-1)};q^6) = (-1)^{n+1}\leg{n-1}{3}q^{-n(n+1)/3} (q^2;q^2)_\infty,
$$
which follows easily from Jacobi's triple product \myeqn{jacp}.
See also \cite[Eq.(4.8)]{Ch-Ga2020a} or \cite[p.99]{At-SwD}. By this and 
\myeqn{NRid} we have
\begin{align*}
\frac{j(z;q)}{1-z}\, R(z;q^2) &= 
\sum_{m=-\infty}^\infty \sum_{n=0}^\infty (-1)^{m+1}
\leg{n-1}{3} q^{ m(m-1)/2 - mn + n(n+1)/6} z^m \\
&=\sum_{m=-\infty}^\infty \sum_{n=0}^\infty (-1)^{m+1}
\leg{n-1}{3} q^{ (m^2+\abs{m})/2 + n\abs{m} + n(n+1)/6} z^m.
\end{align*}
For the last equation we used symmetry in $z$.
Then in this last sum we replace $n$ by $n-3\abs{m}$ 
and use \myeqn{jacp} to obtain
\begin{align*}
(zq)_\infty (z^{-1}q)_\infty (q)_\infty R(z;q^2) 
&= 
\sum_{m=-\infty}^\infty \sum_{n \ge 3\abs{m}} (-1)^{m+1}
\leg{n-1}{3} q^{ n(n+1)/6 -m^2} z^m \\
&=\sum_{n=0}^\infty \sum_{m=-\FL{n/3}}^{\FL{n/3}} (-1)^{m+1}
\leg{n-1}{3} q^{ n(n+1)/6 -m^2} z^m.
\end{align*}
We find that the result \myeqn{rankid2} follows by 
replacing $n$ by $3n+k$ where
$k=-1$, $0$.

\subsubsection*{Proof of \myeqn{rankid3}}
We prove \myeqn{rankid3} by rewriting the right-side in terms  Hickerson and
Mortenson's $f_{1,2,1}$ and then by using one of their theorems and 
some identities for Appell-Lerch series.
From \myeqn{fabcdef} we find that
\begin{align*}
&f_{1,2,1}(z^{-1}q,z^{-2}q,q) \\
&= \sum_{n=0}^\infty \sum_{j=0}^{\FL{n/2}} (-1)^{n+j} z^{-n} 
q^{\frac{1}{2}(n-3j^2) + \frac{1}{2}(n-j)}
+
\sum_{n=0}^\infty \sum_{j=-\FL{n/2}}^{-1} (-1)^{n+j} z^{n+1} 
q^{\frac{1}{2}(n-3j^2) + \frac{1}{2}(n-j)}.
\end{align*}
Therefore
\beq
f_{1,2,1}(z^{-1}q,z^{-2}q,q)  + z f_{1,2,1}(zq,z^2q,q)\\
= \sum_{n=0}^\infty \sum_{j=-\FL{n/2}}^{\FL{n/2}} (-1)^{n+j} 
\left(z^{n+1} + z^{-n} \right)
q^{\frac{1}{2}(n-3j^2) + \frac{1}{2}(n-j)}.
\mylabel{eq:fHRid}
\eeq
From Theorem \thm{HiMoThm1p6} with $n=1$ we have
\begin{align}
&f_{1,2,1}(z^{-1}q,z^{-2}q,q)  + z f_{1,2,1}(zq,z^2q,q)
\mylabel{eq:fgjzmid}\\
&=g_{1,2,1}(z^{-1}q,z^{-2}q,q,z^{-1},z)  + z g_{1,2,1}(zq,z^2q,q,z,z^{-1})
\nonumber\\
&= j(z^{-1}q,q) m(q,q^3,z) + j(z^{-2}q,q) m(z^3q,q^3,z^{-1}) 
\nonumber\\
&\quad + z j(zq;q) m(q,q^3,z^{-1}) + zj(z^{2}q,q) m(z^{-3}q,q^3,z)
\nonumber\\
&=j(z^2;q) \left( m(z^3q,q^3,z^{-2}) - z^{-1} m(z^{-3}q,q^3,z^2)\right)
\nonumber
\end{align}
by Lemma \lem{jzqm}. By \myeqn{Rzqgzqid}, \myeqn{gzqmid},
\myeqn{mid1b}--\myeqn{mid1c}, \myeqn{fgjzmid} and \myeqn{fHRid} we have
\begin{align*}
&(1+z)(z^2q;q)_\infty (z^{-2}q;q)_\infty (q;q)_\infty R(z;q) 
= j(z^2;q)\,\frac{1}{1-z}\,R(z,q) \\
& = j(z^2;q)(1 + z\,g(z,q))\\
& = j(z^2;q)(1 - z^{-1} m(z^{-3}q,q^3,z^2) - m(z^{-3}q^2,q^3,z^2))\\
& = f_{1,2,1}(z^{-1}q,z^{-2}q,q)  + z f_{1,2,1}(zq,z^2q,q)\\
&= \sum_{n=0}^\infty \sum_{j=-\FL{n/2}}^{\FL{n/2}} (-1)^{n+j} 
\left(z^{n+1} + z^{-n} \right)
q^{\frac{1}{2}(n-3j^2) + \frac{1}{2}(n-j)}.
\end{align*}

\subsubsection*{Proof of \myeqn{rankid4}}
The proof of \myeqn{rankid4} is similar to that of \myeqn{rankid2}.
From \myeqn{NRid} we have
\begin{align*}
&(1+z)(z^2q^2;q^2)_\infty (z^{-2}q^2;q^2)_\infty (q^2;q^2)_\infty R(z;q) 
= \frac{j(z^2;q^2)}{1-z} R(z;q)\\
& = \frac{j(z^2;q^2)}{(q)_\infty} 
\sum_{n=-\infty}^\infty \frac{(-1)^n q^{\frac{1}{2}n(3n+1)}}{(1-zq^n)}
\\
& = \frac{j(z^2;q^2)}{(q)_\infty}
\sum_{n=-\infty}^\infty \frac{(-1)^n q^{\frac{1}{2}n(3n+1)}(1+zq^{n})}{(1-z^2q^{2n})}.
\end{align*}
Next we make two applications of Theorem \thm{jbt}, with $k=1$, $p=q^3$,
$q\mapsto q^2$, $y=z^2$ and $x=q^{-1}$ then $x=1$, so that
\begin{align*}
&(1+z)(z^2q^2;q^2)_\infty (z^{-2}q^2;q^2)_\infty (q^2;q^2)_\infty R(z;q) 
\\
&=\frac{1}{(q)_\infty}
\left(
\sum_{m=-\infty}^\infty \sum_{n=0}^\infty
(-1)^{m+n} q^{(n-m)(n-m+1)} j(q^{-2n+1};q^3) z^{2m} \right.\\
&\qquad \left. + 
\sum_{m=-\infty}^\infty \sum_{n=0}^\infty
(-1)^{m+n} q^{(n-m)(n-m+1)} j(q^{-2n};q^3) z^{2m+1}
\right).
\end{align*}
Now
$$
j(q^{n};q^3) = (-1)^{n+1}\leg{n}{3}q^{-\frac{1}{6}(n-1)(n-2)} (q)_\infty,
$$
which follows from Jacobi's triple product \myeqn{jacp}.
Then after some simplification and utilising symmetry in $z$ we find that
\begin{align*}
&(1+z)(z^2q^2;q^2)_\infty (z^{-2}q^2;q^2)_\infty (q^2;q^2)_\infty R(z;q) 
\\
&=
\sum_{n=0}^\infty \sum_{m=0}^{\FL{n/3}} (-1)^n \leg{n+1}{3}
q^{\frac{1}{3}n(n+2) - m(2m+1)}(z^{-2m} + z^{2m+1})\\
&\qquad +
\sum_{n=0}^\infty \sum_{m=0}^{\FL{n/3}} (-1)^n \leg{n}{3}
q^{\frac{1}{3}(n+1)(n+5) - m(2m+3)}(z^{-2m-1} + z^{2m+2}).
\end{align*}
Then after further simplification and series manipulation we obtain
the final result \myeqn{rankid4}. We omit these details.
This completes the proof of Theorem \thm{rankids}.

\section{Proof of Dyson's rank conjecture mod $5$}
\mylabel{sec:proofDRC5}
\subsection{$5$-dissections of some theta-products}                          
\mylabel{subsec:5disstheta}

Let $p$ be a positive integer and $F(q)$ be a power series in $q$
$$
F(q) = \sum_n a(n) q^n.
$$
The \textit{$p$-dissection} of $F(q)$ splits a series into $p$ parts
according to the residue mod $p$ of the exponent of $q$ and is given by
\beq
F(q) = \sum_{r=0}^{p-1} \sum_{n\equiv r\pmod{p}} a(n) q^n = 
\sum_{r=0}^{p-1} q^r F_r(q^p).
\mylabel{eq:pdissdef}
\eeq
The Atkin $U$-operators pick out a part of this $p$-dissection
\beq
U_{p,r}(F(q)) := F_r(q) = \sum_n a(pn+r) q^n
\mylabel{eq:Uprdef}
\eeq
for $0 \le r \le p-1$.
We also define the operators $U_{p,m}^{*}$, and $A_{p,m}$ by
\begin{align*}
U_{p,m}^{*} (F(q)) &= \sum_n a(pn + m) q^{pn+m},   \\
A_{p,m}(F) &= \sum_n a(n) q^{(n-m)/p}, 
\end{align*}
so that
$$
U_{p,m} = A_{p,0} \circ U_{p,m}^{*}.
$$
The following $5$-dissections are well-known and can be proved
with little more than Jacobi's triple product identity \myeqn{jacp}.

\begin{lemma}
\mylabel{lem:5disstheta}
Let $\zeta= \exp(2\pi i/5)$. Then
\begin{align}
(\zeta q)_\infty (\zeta^{-1} q)_\infty (q)_\infty &=
J_{25,10} + q (\zeta^2 + \zeta^{-2}) J_{25,5},
\mylabel{eq:5diss1}\\
E(q) &:= (q)_\infty = 
J_{25}\left(
{\frac {J_{{25,10}}}{J_{{25,5}}}} -q -q^2{\frac {J_{{25,5}}}{J_{{25,10}}}}
\right),
\mylabel{eq:5diss2}\\
\theta_4(q) &= \sum_{n=-\infty}^\infty (-1)^n q^{n^2} 
= J_{50,25} - 2q J_{50,15} + 2q^4 J_{50,5}
\mylabel{eq:5diss3}
\end{align}
\end{lemma}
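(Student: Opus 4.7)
The plan is to treat the three identities separately, with \myeqn{5diss1} and \myeqn{5diss2} sharing the same combinatorial backbone, and \myeqn{5diss3} being a direct application of Jacobi's triple product \myeqn{jacp}.

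For \myeqn{5diss3}, I would begin with $\theta_4(q) = \sum_{n=-\infty}^{\infty} (-1)^n q^{n^2}$ and split on the residue of $n$ mod $5$. Since $n^2 \equiv 0, 1, 4 \pmod{5}$, only three residue classes of the exponent of $q$ appear. Writing $n = 5k$, $n = 5k \pm 1$, and $n = 5k \pm 2$ and collecting, the three pieces become $\sum_k (-1)^k q^{25k^2} = J_{50,25}$, $-2q \sum_k (-1)^k q^{25k^2 - 10k} = -2q\, J_{50,15}$, and $2q^4 \sum_k (-1)^k q^{25k^2 - 20k} = 2q^4\, J_{50,5}$, each identification being immediate from \myeqn{jacp}. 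The only care needed is tracking the signs $(-1)^{5k+r} = (-1)^{k+r}$.

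For \myeqn{5diss1}, the first move is to write $(\zeta q)_\infty (\zeta^{-1}q)_\infty (q)_\infty = j(\zeta;q)/(1-\zeta)$ directly from the definition of $j$, then expand $j(\zeta;q) = \sum_n (-1)^n \zeta^n q^{n(n-1)/2}$ via \myeqn{jacp}. Splitting $n = 5k + r$ with $r \in \{-2,-1,0,1,2\}$ and reapplying \myeqn{jacp} to each inner sum over $k$ shows that the $r = 0, 1$ pieces each contribute a multiple of $J_{25,10}$, the $r = -1, 2$ pieces each contribute a multiple of $q\, J_{25,5}$, and the $r = -2$ piece vanishes since $j(1;q^{25}) = 0$. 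The outcome is
\[
j(\zeta;q) = (1-\zeta)\,J_{25,10} + q(\zeta^2 - \zeta^{-1})\,J_{25,5},
\]
and dividing by $1-\zeta$ reduces the claim to the cyclotomic identity $(\zeta^2 - \zeta^{-1})/(1-\zeta) = \zeta^2 + \zeta^{-2}$, which follows from $\zeta^2 - \zeta^{-1} = -\zeta^{-1}(1-\zeta)(1+\zeta+\zeta^2)$ together with $1+\zeta+\zeta^2+\zeta^3+\zeta^4 = 0$.

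For \myeqn{5diss2} I would avoid a second dissection argument and derive it from \myeqn{5diss1}. Multiplying \myeqn{5diss1} by its counterpart with $\zeta$ replaced by $\zeta^2$, the left-hand side becomes $(q)_\infty^2 \prod_{k=1}^{4}(\zeta^k q;q)_\infty$, which collapses to $J_5\,(q)_\infty$ via the factorisation $\prod_{k=0}^{4}(1-\zeta^k x) = 1-x^5$. The right-hand side is a product of two binomials, and the Vieta-style relations $(\zeta+\zeta^{-1})+(\zeta^2+\zeta^{-2}) = -1$ and $(\zeta+\zeta^{-1})(\zeta^2+\zeta^{-2}) = -1$ collapse it to $J_{25,10}^2 - q\,J_{25,10}J_{25,5} - q^2\,J_{25,5}^2$. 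Dividing through by $J_{25,10}J_{25,5}$ and then invoking the telescoping relation $J_{25,10}J_{25,5} = J_5 J_{25}$, which is immediate from $\prod_{r=1}^{4}(q^{5r};q^{25})_\infty \cdot J_{25} = J_5$, yields the desired form. The only mild obstacle throughout is the cyclotomic bookkeeping; no step needs anything beyond \myeqn{jacp}.
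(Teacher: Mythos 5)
Your proof is correct and complete. Note that the paper itself does not argue the lemma directly: it cites \cite[Lemmas 3.18, 3.19]{Ga88b} for \eqref{eq:5diss1}--\eqref{eq:5diss2} and observes that the underlying Atkin--Swinnerton-Dyer proof of the $5$-dissection of $E(q)$ rests on Euler's pentagonal number theorem together with Watson's quintuple product identity. Your handling of \eqref{eq:5diss1} and \eqref{eq:5diss3} is the standard residue-class dissection implicit in those references, and the constants all check: $j(\zeta;q)=(1-\zeta)J_{25,10}+q(\zeta^2-\zeta^{-1})J_{25,5}$, the quotient $(\zeta^2-\zeta^{-1})/(1-\zeta)=\zeta^2+\zeta^{-2}$, and the vanishing of the $r\equiv-2$ class via $j(1;q^{25})=0$ are all right. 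Where you genuinely diverge from the cited route is \eqref{eq:5diss2}: multiplying \eqref{eq:5diss1} by its $\zeta\mapsto\zeta^2$ conjugate, collapsing the left side to $J_5\,(q)_\infty$ via $\prod_{k=0}^{4}(1-\zeta^k x)=1-x^5$, collapsing the right side via $a+b=ab=-1$ for $a=\zeta^2+\zeta^{-2}$, $b=\zeta+\zeta^{-1}$, and finishing with $J_{25,5}J_{25,10}=J_5J_{25}$. This keeps the entire lemma within the scope of \eqref{eq:jacp} and avoids the quintuple product altogether, which is exactly in the spirit of the paper's remark that these dissections need ``little more than Jacobi's triple product identity''; the trade-off is that the argument is special to $p=5$, whereas the Atkin--Swinnerton-Dyer lemma gives the $p$-dissection of $E(q)$ for every $p$ coprime to $6$. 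One point worth making explicit: substituting $\zeta\mapsto\zeta^2$ in \eqref{eq:5diss1} requires the identity for an arbitrary primitive fifth root of unity, which your dissection proof does furnish since it uses only $\zeta^5=1$, $\zeta\neq1$ and $1+\zeta+\zeta^2+\zeta^3+\zeta^4=0$.
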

\begin{proof}
Equation \myeqn{5diss1} follows from \cite[Lemma 3.19]{Ga88b}. See 
\cite[Lemma 3.18]{Ga88b} for equation \myeqn{5diss2}. The general $p$-dissection
of $E(q)$, where $(p,6)=1$, is due to Atkin and Swinnerton-Dyer 
\cite[Lemma 6]{At-SwD}. The proof depends on Euler's Pentagonal
Number Theorem \cite[p.11]{An-book},
 and Watson's quintuple product identity \cite[Lemma 5]{At-SwD}.
Equation \myeqn{5diss3} follows easily from Jacobi's triple product
identity \myeqn{jacp}.
\end{proof}

The following results follow easily from Lemma \lem{5disstheta}.
\begin{lemma}
\mylabel{lem:5sifttheta}
\begin{align}
U_{5,2}\left( E(q)^2 \right) &= - J_5^2,
\mylabel{eq:sift1}\\
U_{5,3}\left(\theta_4(q)\, E(q) \right) 
&= 2\,\frac{J_5 J_{5,1} J_{10,3}}
          {J_{5,2}},
\mylabel{eq:sift2}\\
U_{5,4}\left(\theta_4(q)\, E(q) \right) 
&= 2\,\frac{J_5 J_{5,2} J_{10,1}}
          {J_{5,1}}.
\mylabel{eq:sift3}
\end{align}
\end{lemma}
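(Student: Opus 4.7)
The plan is to apply the $5$-dissections from Lemma \lem{5disstheta} directly and read off the relevant residue classes modulo $5$. Write
$$
E(q) = J_{25}\bigl(A - q - q^2 B\bigr),\qquad A := \frac{J_{25,10}}{J_{25,5}},\ B := \frac{J_{25,5}}{J_{25,10}},
$$
so that $AB = 1$ and $J_{25}$, $A$, $B$ are all power series in $q^5$. Likewise, from \myeqn{5diss3}, $\theta_4(q) = J_{50,25} - 2qJ_{50,15} + 2q^4 J_{50,5}$, and each of $J_{50,25}$, $J_{50,15}$, $J_{50,5}$ is a power series in $q^5$ (their defining exponents $25n^2,\ 25n^2-10n,\ 25n^2-20n$ are all divisible by $5$). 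Consequently, when we form the relevant products and collect terms, each $U_{5,r}$ reduces to singling out the finitely many ``external'' powers of $q$ that are congruent to $r$ modulo $5$ and then applying the substitution $J_{25}\mapsto J_5$, $J_{25,k}\mapsto J_{5,k/5}$, $J_{50,k}\mapsto J_{10,k/5}$.

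For \myeqn{sift1}, I expand
$$
(A - q - q^2 B)^2 = A^2 - 2Aq + (1 - 2AB)q^2 + 2Bq^3 + B^2 q^4 = A^2 - 2Aq - q^2 + 2Bq^3 + B^2 q^4,
$$
where I used $AB = 1$. Since $A^2, B^2$ are in $q^5$, the only monomial with exponent $\equiv 2 \pmod 5$ is the bare $-q^2$. Thus the $5n+2$ part of $E(q)^2$ is $-q^2 J_{25}^2$, and $U_{5,2}$ delivers $-J_5^2$.

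For \myeqn{sift2} and \myeqn{sift3}, I look at the product
$$
(A - q - q^2 B)\bigl(J_{50,25} - 2q J_{50,15} + 2q^4 J_{50,5}\bigr),
$$
whose ``external'' exponents are sums $a+b$ with $a\in\{0,1,2\}$ coming from the first factor and $b\in\{0,1,4\}$ coming from the second. For residue $3\pmod 5$, the only solution is $(a,b)=(2,1)$, contributing $(-q^2 B)(-2qJ_{50,15}) = 2q^3\,\tfrac{J_{25,5}}{J_{25,10}}J_{50,15}$. Multiplying by $J_{25}$ and applying $U_{5,3}$ via the substitution rule above gives exactly $2 J_5 J_{5,1} J_{10,3}/J_{5,2}$. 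For residue $4\pmod 5$, the only solution is $(a,b)=(0,4)$, contributing $A\cdot 2q^4 J_{50,5} = 2q^4\,\tfrac{J_{25,10}}{J_{25,5}}J_{50,5}$, and $U_{5,4}$ gives $2 J_5 J_{5,2} J_{10,1}/J_{5,1}$.

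There is no real obstacle: the heart of the argument is the uniqueness of the residue pair $(a,b)$ in each case, which is an easy check, together with the cancellation $1 - 2AB = -1$ in the squared expansion. The only care needed is bookkeeping—confirming that no other $(a,b)$ combination can shift into the correct residue class (their sums lie in $\{0,1,2,3,4,5,6\}$, so no wraparound occurs) and that the $q^5\mapsto q$ substitution acts consistently on each $J_{25,\cdot}$ and $J_{50,\cdot}$ factor to produce the stated $J_{5,\cdot}$ and $J_{10,\cdot}$.
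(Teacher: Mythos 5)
Your proof is correct and is exactly the intended argument: the paper offers no details, stating only that the lemma ``follows easily from Lemma \ref{lem:5disstheta},'' and your direct expansion of the $5$-dissections (using $AB=1$ for \eqref{eq:sift1} and the uniqueness of the residue pair $(a,b)$ for \eqref{eq:sift2}--\eqref{eq:sift3}) is precisely that easy deduction. All the bookkeeping checks out, including the substitutions $J_{25}\mapsto J_5$, $J_{25,k}\mapsto J_{5,k/5}$, $J_{50,k}\mapsto J_{10,k/5}$.
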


\subsection{$5$-dissections involving the rank function and Hecke-Rogers series}
\mylabel{subsec:5dissrankHR}
We begin by
letting $z=1$ in \myeqn{rankid1} to obtain the identity
\beq
(q)_\infty^2 =  E(q)^2 = 
\sum_{n=0}^\infty 
 \sum_{j=-\lfloor n/2\rfloor}^{\lfloor n/2 \rfloor} (-1)^{n+j}
     q^{\frac{1}{2}(n^2-3j^2) + \frac{1}{2}(n-j)},
\mylabel{eq:HRE12}
\eeq
which, as mentioned before,  
is originally due to L.~J.~Rogers \cite[p.323]{Ro1894}.

\begin{lemma}
\mylabel{lem:zR5dis}
Let $\zeta=\exp(2\pi i/5)$. Then
$$
U_{5,2}\left( 
(\zeta q)_\infty (\zeta^{-1} q)_\infty (q)_\infty \,R(\zeta,q)
\right)
= - J_5^2.
$$
\end{lemma}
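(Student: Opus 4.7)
The plan is to apply the operator $U_{5,2}$ directly to both sides of identity \myeqn{rankid1} with $z=\zeta$. The left side then produces exactly the expression we want to evaluate, so the entire task reduces to determining which terms on the right survive the sieving and with what coefficients. Writing the two exponents that appear in the inner sums as
$$
E_1(n,j) = \tfrac{1}{2}\bigl(n(n+1) - j(3j+1)\bigr),\qquad
E_2(n,j) = \tfrac{1}{2}\bigl(n(n+1) - j(3j-1)\bigr),
$$
the sieve keeps only those pairs $(n,j)$ for which $E_i(n,j)\equiv 2\pmod 5$.

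The crux is a small arithmetic lemma: $E_1(n,j)\equiv 2\pmod 5$ is equivalent to $(n,j)\equiv(2,-1)\pmod 5$, while $E_2(n,j)\equiv 2\pmod 5$ is equivalent to $(n,j)\equiv(2,1)\pmod 5$. The quickest proof is to double each congruence and complete the square in both variables, recasting them as
$$
(2n+1)^2 + 3(j+1)^2 \equiv 0\pmod 5 \qquad\text{and}\qquad (2n+1)^2 + 3(j-1)^2 \equiv 0\pmod 5
$$
respectively. Since $-3\equiv 2$ is a quadratic non-residue mod $5$, each such sum of two squares can vanish only if both squared quantities vanish individually, which pins down the residues of $n$ and $j$ uniquely.

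The decisive payoff is that on every surviving term the $\zeta$-coefficient collapses to $2$: for $E_1$, the condition $(n,j)\equiv(2,-1)\pmod 5$ forces $n-3j\equiv 0\pmod 5$, so $\zeta^{n-3j}+\zeta^{3j-n}=2$; for $E_2$, the condition $(n,j)\equiv(2,1)\pmod 5$ forces $n-3j+1\equiv 0\pmod 5$, so $\zeta^{n-3j+1}+\zeta^{3j-n-1}=2$. In each case the $\zeta$-coefficient agrees with what one would obtain by setting $z=1$ in \myeqn{rankid1}. Consequently, $U_{5,2}$ applied to the right side of \myeqn{rankid1} at $z=\zeta$ produces exactly the same series as $U_{5,2}$ applied at $z=1$, namely $U_{5,2}(E(q)^2)$, which by \myeqn{sift1} equals $-J_5^2$. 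The only genuinely substantive step is the arithmetic lemma, but the completion-of-squares trick reduces it to the elementary observation that $-3$ is a non-residue modulo $5$.
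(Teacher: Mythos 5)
Your proposal is correct and follows essentially the same route as the paper: sieve the right side of \eqref{eq:rankid1} at $z=\zeta$ for exponents $\equiv 2\pmod 5$, observe that the surviving terms force $n-3j\equiv 0$ (resp.\ $n-3j+1\equiv 0$) mod $5$ so each $\zeta$-coefficient collapses to $2$, and conclude via \eqref{eq:HRE12} and \eqref{eq:sift1}. The only addition is your completing-the-square/non-residue justification of the congruence conditions, which the paper merely asserts; that verification is correct.
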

\begin{proof}
We have
$$
\frac{1}{2}(n^2-3j^2) + \frac{1}{2}(n-j) \equiv 2\pmod{5}
$$
if and only if $n\equiv 2 \pmod{5}$ and $j\equiv 4 \pmod{5}$,
in which case $n-3j\equiv0\pmod{5}$. Similarly 
$$
\frac{1}{2}(n^2-3j^2) + \frac{1}{2}(n+j) \equiv 2\pmod{5}
$$
if and only if $n\equiv 2 \pmod{5}$ and $j\equiv 1 \pmod{5}$,
in which case $n-3j+1\equiv0\pmod{5}$. Thus 
from \myeqn{rankid1}, \myeqn{HRE12} and \myeqn{sift1} we have
\begin{align*}
&U_{5,2}\left( 
(\zeta q)_\infty (\zeta^{-1} q)_\infty (q)_\infty \,R(\zeta,q)
\right)\\
&=U_{5,2}
\left(\sum_{n=0}^\infty 
 \sum_{j=-\lfloor n/2\rfloor}^{\lfloor n/2 \rfloor} (-1)^{n+j}
     q^{\frac{1}{2}(n^2-3j^2) + \frac{1}{2}(n-j)}\right)\\
&=U_{5,2}\left( E(q)^2 \right) = -J_5^2.
\end{align*}
\end{proof}
Next we use the Hecke-Rogers identity \myeqn{rankid2}.
By letting $z=1$ we see that this identity is a $z$-analog of
\beq             
\theta_4(q) \, E(q)
=
\frac{(q)_\infty^3}{(q^2;q^2)_\infty} 
= \sum_{n=0}^\infty 
 \sum_{j=-n}^{n} (-1)^{j}
 q^{\frac{1}{2}n(3n+1) -j^2}(1- q^{2n+1}).
\mylabel{eq:T4EHRid}
\eeq       
\begin{lemma}
\mylabel{lem:zR52dis}
Let $\zeta=\exp(2\pi i/5)$. Then
\begin{align}
U_{5,3}\left( 
(\zeta q)_\infty (\zeta^{-1} q)_\infty (q)_\infty \,R(\zeta,q^2)
\right)
&= (\zeta^2 + \zeta^3)\,\frac{J_5 J_{5,1} J_{10,3}}
          {J_{5,2}},
\mylabel{eq:U53R2id}
\\
U_{5,4}\left( 
(\zeta q)_\infty (\zeta^{-1} q)_\infty (q)_\infty \,R(\zeta,q^2)
\right)
&= (\zeta + \zeta^4)\,\frac{J_5 J_{5,2} J_{10,1}}
          {J_{5,1}}.
\mylabel{eq:U54R2id}
\end{align}
\end{lemma}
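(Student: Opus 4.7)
My approach is to proceed exactly in parallel with the proof of Lemma \lem{zR5dis}, using the Hecke-Rogers identity \myeqn{rankid2} as the starting point. The crucial feature is that the only $z$-dependence on the right side of \myeqn{rankid2} appears through the factor $z^j$, so setting $z=\zeta$ and applying $U_{5,m}$ reduces the lemma to a careful sifting of the double series
$$
\sum_{n=0}^\infty \sum_{j=-n}^n (-1)^j \zeta^j q^{\frac{1}{2}n(3n+1) - j^2}(1 - q^{2n+1})
$$
according to the residue class mod $5$ of the $q$-exponent.

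The first step is to tabulate residues. One checks that $\frac{1}{2}n(3n+1) \pmod 5$ takes the values $\{0,2,2,0,1\}$ as $n$ runs through $\{0,1,2,3,4\}$, while $j^2 \pmod 5 \in \{0,1,4\}$ according as $j \equiv 0, \pm 1, \pm 2 \pmod 5$. A short case analysis of $\frac{1}{2}n(3n+1) - j^2 \equiv 3 \pmod 5$ and $\frac{1}{2}n(3n+1) + (2n+1) - j^2 \equiv 3 \pmod 5$ then shows that every surviving term has $j^2 \equiv 4 \pmod 5$, i.e., $j \equiv \pm 2 \pmod 5$. The analogous analysis for residue $4$ forces $j^2 \equiv 1 \pmod 5$, i.e., $j \equiv \pm 1 \pmod 5$. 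In particular $j = 0$ never contributes, so the inner sum decomposes cleanly into pairs $\{j, -j\}$ with $j > 0$.

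For such a pair, since $(-1)^{-j} = (-1)^j$, the joint contribution is $(-1)^j(\zeta^j + \zeta^{-j}) q^{\text{exponent}}$. For $j \equiv \pm 2 \pmod 5$ one has $\zeta^j + \zeta^{-j} = \zeta^2 + \zeta^{-2} = \zeta^2 + \zeta^3$, and for $j \equiv \pm 1 \pmod 5$ one has $\zeta + \zeta^4$. In either case, this factor is constant across the whole sifted sum and may be pulled out.

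To finish, I apply the identical sifting procedure to the specialization $z=1$ of \myeqn{rankid2}, which by \myeqn{T4EHRid} is $\theta_4(q)E(q)$, already evaluated at $U_{5,3}$ and $U_{5,4}$ by \myeqn{sift2} and \myeqn{sift3}. Since pairing $\pm j$ contributes the factor $2$ when $z=1$ but $\zeta^2+\zeta^3$ (respectively $\zeta+\zeta^4$) when $z=\zeta$, the two sifted series differ by a factor of $\tfrac{1}{2}(\zeta^2+\zeta^3)$ (respectively $\tfrac{1}{2}(\zeta+\zeta^4)$), immediately giving \myeqn{U53R2id} and \myeqn{U54R2id}. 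The only real obstacle is the bookkeeping in the residue tables: verifying that the $j$-constraint is purely through $j^2 \pmod 5$ (and hence is independent of the sign of $j$) is what permits the clean factorization, and this is the step I would carry out most carefully.
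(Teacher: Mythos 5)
Your proposal is correct and follows essentially the same route as the paper's own proof: sift \eqref{eq:rankid2} at $z=\zeta$ by the residue of the $q$-exponent mod $5$, observe that only $j\equiv\pm2$ (resp.\ $j\equiv\pm1$) survives so the $\pm j$ pairing extracts the constant factor $\tfrac12(\zeta^2+\zeta^3)$ (resp.\ $\tfrac12(\zeta+\zeta^4)$) relative to the $z=1$ case, and then invoke \eqref{eq:T4EHRid} together with \eqref{eq:sift2} and \eqref{eq:sift3}. No gaps.
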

\begin{proof}
We have 
$$
\frac{1}{2}n(3n+1) -j^2 \equiv 3\pmod{5}
$$
if and only if $n\equiv 1 \pmod{5}$ and $j\equiv \pm 2 \pmod{5}$,
or 
$n\equiv 2 \pmod{5}$ and $j\equiv \pm 2 \pmod{5}$.  
Similarly 
we have 
$$
\frac{1}{2}n(3n+1) +(2n+1) -j^2 \equiv 3\pmod{5}
$$
if and only if $n\equiv 2 \pmod{5}$ and $j\equiv \pm 2 \pmod{5}$,
or 
$n\equiv 3 \pmod{5}$ and $j\equiv \pm 2 \pmod{5}$.  
Thus from \myeqn{rankid2} we have
\begin{align*}
&U_{5,3}^{*}\left( 
(\zeta q)_\infty (\zeta^{-1} q)_\infty (q)_\infty \,R(\zeta,q^2)
\right)
=U_{5,3}^{*}
\left( \sum_{n=0}^\infty 
 \sum_{j=-n}^{n} (-1)^{j}
\zeta^j q^{\frac{1}{2}n(3n+1) -j^2}(1- q^{2n+1})
\right) \\
&= \zeta^2\,
\left( 
\sum_{\substack{n\ge0\\ n\equiv 1,2\,\pmod{5}}}
\sum_{\substack{-n \le j \le n\\ j\equiv 2\, \pmod{5}}}
(-1)^j 
q^{\frac{1}{2}n(3n+1) -j^2}\right.\\
&\qquad\qquad \left.
-
\sum_{\substack{n\ge0\\ n\equiv 2,3 \pmod{5}}}
\sum_{\substack{-n \le j \le n\\ j\equiv 2 \pmod{5}}}
(-1)^j 
q^{\frac{1}{2}n(3n+1) + 2(n+1)-j^2}
\right) \\
&\quad +
\zeta^3\,
\left( 
\sum_{\substack{n\ge0\\ n\equiv 1,2\,\pmod{5}}}
\sum_{\substack{-n \le j \le n\\ j\equiv 3\, \pmod{5}}}
(-1)^j 
q^{\frac{1}{2}n(3n+1) -j^2}\right.\\
&\qquad\qquad \left.
-
\sum_{\substack{n\ge0\\ n\equiv 2,3 \pmod{5}}}
\sum_{\substack{-n \le j \le n\\ j\equiv 3 \pmod{5}}}
(-1)^j 
q^{\frac{1}{2}n(3n+1) + 2(n+1)-j^2}
\right) \\
&= \frac{1}{2}(\zeta^2 + \zeta^3)\,
\left( 
\sum_{\substack{n\ge0\\ n\equiv 1,2\,\pmod{5}}}
\sum_{\substack{-n \le j \le n\\ j\equiv 2,3 \pmod{5}}}
(-1)^j 
q^{\frac{1}{2}n(3n+1) -j^2}\right.\\
&\qquad\qquad \left.
-
\sum_{\substack{n\ge0\\ n\equiv 2,3 \pmod{5}}}
\sum_{\substack{-n \le j \le n\\ j\equiv 2,3 \pmod{5}}}
(-1)^j 
q^{\frac{1}{2}n(3n+1) + 2(n+1)-j^2}
\right) \\
&=\frac{1}{2}(\zeta^2 + \zeta^3) 
 U_{5,3}^{*}
\left( \sum_{n=0}^\infty 
 \sum_{j=-n}^{n} (-1)^{j}
 q^{\frac{1}{2}n(3n+1) -j^2}(1- q^{2n+1})
\right).       
\end{align*}
Thus by \myeqn{T4EHRid} and \myeqn{sift2} we have
\begin{align*}
&U_{5,3}\left( 
(\zeta q)_\infty (\zeta^{-1} q)_\infty (q)_\infty \,R(\zeta,q^2)
\right)
=\frac{1}{2}(\zeta^2 + \zeta^3) U_{5,3}((q)_\infty^3\,R(1,q^2))\\
&=\frac{1}{2}(\zeta^2 + \zeta^3) U_{5,3}(\theta_4(q) \,E(q))     
= (\zeta^2+\zeta^3)\,\frac{J_5 J_{5,2} J_{10,1}}
          {J_{5,1}},
\end{align*}
which is equation \myeqn{U53R2id}.
The proof of \myeqn{U54R2id} is analogous.
\end{proof}
\subsection{Completing the proof of Dyson's mod $5$ rank conjecture}         
\mylabel{subsec:pfDRC5}
We start by letting $z=\zeta=\exp(2\pi i/5)$ in the generating function
for the rank. We have
\begin{align*}
R(\zeta,q) &= \sum_{n=0}^\infty \sum_m N(m,n) \zeta^m q^n 
           = \sum_{n=0}^\infty \sum_{r=0}^4 
\sum_{\substack{m \equiv r\\\mbox{\scriptsize mod $5$}}}  
N(m,n) \zeta^m q^n \\
           &= \sum_{n=0}^\infty \Parans{\sum_{r=0}^4 N(r,5,n) \zeta^r} q^n,
\end{align*}
and
$$
\mbox{Coefficient of $q^{5n+4}$ in} R(\zeta,q)
=
\sum_{r=0}^4 N(r,5,5n+4) \zeta^r.                
$$
Therefore Dyson's mod $5$ rank conjecture \myeqn{Dysonconj5} is equivalent
to showing
\beq
U_{5,4}(R(\zeta,q))=0.
\mylabel{eq:DRC5a}
\eeq
See \cite[Lemma 2.2]{Ga88b}.
Since we have identities on both base $q$ and $q^2$ we instead prove
the equivalent identity
\beq
U_{5,3}(R(\zeta,q^2))=0,
\mylabel{eq:DRC5b}
\eeq
and write the $5$-dissection of $R(\zeta,q^2)$:
$$
R(\zeta,q^2) = \sum_{k=0}^4 q^k R_k(q^5).
$$
We obtain three linear equations for the functions
$R_2(q)$, $R_3(q)$ and $R_4(q)$.
In Lemma \lem{zR5dis} we replace $q$ by $q^2$ and use \myeqn{5diss1}
to find
\beq
(\zeta^2+\zeta^3) J_{10,2} \, R_2(q) + J_{10,4}\,R_4(q) =-J_{10}^2.
\mylabel{eq:EQ1}
\eeq
By equation \myeqn{5diss1} and Lemma \lem{zR52dis} we have
the following two equations.
\begin{align}
(\zeta^2+\zeta^3) J_{5,1} \,R_2(q) + J_{5,2} \,R_3(q) &=
(\zeta^2+\zeta^3) \frac{ J_{5,1} J_5 J_{10,3}}{J_{5,2}},
\mylabel{eq:EQ2}\\
(\zeta^2+\zeta^3) J_{5,1} \,R_3(q) + J_{5,2} \,R_4(q) &=
(\zeta+\zeta^4) \frac{ J_{5,2} J_5 J_{10,1}}{J_{5,1}}.
\mylabel{eq:EQ3}
\end{align}
Solving equations \myeqn{EQ1}--\myeqn{EQ3} we find that
\begin{align}
R_3(q) &= 
\frac{1}{D(q)} \left(
J_{5}\,J_{5,1}^3\,J_{10,2}\,J_{10,3}\,J_{10,4}
-2\,J_{5}\,J_{5,1}^2\,J_{5,2}\,J_{10,1}\,J_{10,4}^2
+J_{5}\,J_{5,2}^3\,J_{10,1}\,J_{10,2}\,J_{10,4} \right.
\mylabel{eq:EQNSOL3}\\
&\quad +J_{10}^2\,J_{5,1}^3\,J_{5,2}\,J_{10,4}
-J_{10}^2\,J_{5,1}\,J_{5,2}^3\,J_{10,2}
-J_{5,2}\,(J_{5}\,J_{5,1}^2\,J_{10,1}\,J_{10,4}^2
\nonumber\\
&\qquad 
\left.
+J_{5}\,J_{5,1}\,J_{5,2}\,J_{10,2}^2\,J_{10,3}
-J_{5}\,J_{5,2}^2\,J_{10,1}\,J_{10,2}\,J_{10,4}
-J_{10}^2\,J_{5,1}^3\,J_{10,4})\,(\zeta^2+\zeta^3)
\right),
\nonumber
\end{align}
where
\begin{align*}
D(q) &=
J_{5,1}^4\,J_{10,4}^2
+J_{5,1}^2\,J_{5,2}^2\,J_{10,2}\,J_{10,4}
-J_{5,2}^4\,J_{10,2}^2\\
&=
1-6\,q+10\,{q}^{2}+4\,{q}^{3}-19\,{q}^{4}-10\,{q}^{6}+64\,{q}^{7}-9\,
{q}^{8}-66\,{q}^{9}-40\,{q}^{11}+\cdots.
\end{align*}
We have
\beq
\frac{J_{10,1}\,J_{10,4}}{J_{10}^2}
=
\frac{J_{5,1}}{J_{5}},\qquad
\frac{J_{10,2}\,J_{10,3}}{J_{10}^2}
=
\frac{J_{5,3}}{J_{5}}.
\mylabel{eq:Jsimp}
\eeq
This together with \myeqn{EQNSOL3} and some simplification we have
$$
R_3(q) = 0,
$$
which completes the proof of \myeqn{DRC5b} and thus Dyson's mod $5$
rank conjecture \myeqn{Dysonconj5}.

It can be shown that $D(q)$
is an eta-quotient
$$
D(q) = \frac{J_{10}^3 J_{1}^6}{J_{5}^2 J_2} = \frac{1}{q}
\frac{\eta^3(10\tau) \eta^6(\tau)}{\eta^2(5\tau) \eta(2\tau)}.
$$
This can be proved using standard modular function techniques, but this
identity is not needed in the proof of \myeqn{DRC5b}.

Since $R_3(q)=0$, equations \myeqn{EQ2} and \myeqn{EQ3} imply
that $R_2(q)$ and $R_4(q)$ have product forms:
\beq
R_2(q) = \frac{J_{10}^2}{J_{10,2}},\quad
R_4(q) = -(1 + \zeta^2 + \zeta^3)\,\frac{J_{10}^2}{J_{10,4}}.
\mylabel{eq:R2+4prodforms}
\eeq

\section{Proof of Ramanujan's mod $5$ rank identity}
Again let $\zeta=\exp(2\pi i/5)$. The following identity appears on p.20 of 
Ramanujan's Lost Notebook \cite{Ra1988}.
\begin{align}
R(\zeta,q) &= A(q^5) + (\zeta + \zeta^{-1}-2)\,\phi(q^5) + q\,B(q^5) + 
(\zeta+\zeta^{-1})\,q^2\,C(q^5)
\label{eq:Ramid5}\\
&\quad 
- (\zeta+\zeta^{-1})\,q^3\left\{D(q^5) - (\zeta^2 + \zeta^{-2}  - 2)\frac{\psi(q^5)}{q^5}
\right\},
\nonumber
\end{align}
where
$$
A(q) = \frac{(q^2,q^3,q^5;q^5)_\infty}{(q,q^4;q^5)_\infty^2},\,
B(q) = \frac{(q^5;q^5)_\infty}{(q,q^4;q^5)_\infty},\,
C(q) = \frac{(q^5;q^5)_\infty}{(q^2,q^3;q^5)_\infty},\,
D(q) = \frac{(q,q^4,q^5;q^5)_\infty}{(q^2,q^3;q^5)_\infty^2},
$$
and
$$
\phi(q) = -1 + \sum_{n=0}^\infty \frac{q^{5n^2}}{(q;q^5)_{n+1} (q^4;q^5)_n},\qquad
\psi(q) =  -1 + 
\sum_{n=0}^\infty \frac{q^{5n^2}}{(q^2;q^5)_{n+1} (q^3;q^5)_n}.
$$
We write the $5$-dissection of $R(\zeta,q)$:
$$
R(\zeta,q) = \sum_{k=0}^4 q^k R_k(q^5).
$$
From equations \myeqn{DRC5a} and \myeqn{R2+4prodforms} we have
\beq
R_1(q) = \frac{J_5^2}{J_{5,1}}, \quad 
R_2(q) = (\zeta+\zeta^{-1})\,\frac{J_5^2}{J_{5,2}} \quad 
R_4(q)=0.
\mylabel{eq:R124dis5}
\eeq
It suffices to show that
\begin{align}
R_0(q) &= 
{\frac {{J_{{5}}}^{2}J_{{5,2}}}{{J_{{5,1}}}^{2}}}+ \left( {\zeta}^{4}+
\zeta-2 \right) \, \phi(q), 
\mylabel{eq:R05id}\\
R_3(q) &= 
- \left( {\zeta}^{4}+\zeta \right)\,
\frac{J_{{5,1}}{J_{{5}}}^{2}}{{J_{{5,2}}}^{2}}+
\frac{1}{q}\,\left( 2\,{\zeta}^{3}+2\,{\zeta}^{2}+1\right)\,\psi(q).
\mylabel{eq:R35id}
\end{align}
This time we use the Hecke-Rogers identity \myeqn{rankid3}.
By letting $z=1$ we see that this is a different $z$-analog of 
\myeqn{HRE12}.
We define
\beq
\Rtwid(z,q) = (1+z)(z^2q;q)_\infty (z^{-2}q;q)_\infty (q;q)_\infty R(z,q) .
\mylabel{eq:Rtwiddef}
\eeq
\begin{lemma}
\mylabel{lem:U503ids}
Let $\zeta= \exp(2\pi i/5)$. Then
\begin{align}
U_{5,0}\left(\Rtwid(\zeta,q)\right) &=
(1+ \zeta)\, \frac{J_5^2\,J_{5,2}^2}{J_{5,1}^2}
- (2 + 2\zeta + \zeta^3) \,\left(\Rtwid(q,q^5) - J_{5,2}\right),
\mylabel{eq:U50}\\
U_{5,4}\left(\Rtwid(\zeta,q)\right) &= 
(\zeta^2 + \zeta^4)\, \frac{J_5^2\,J_{5,1}^2}{J_{5,2}^2}
- \frac{1}{q}\,(2 + 2\zeta + \zeta^3) \,\left(\Rtwid(q^2,q^5) - J_{5,1}\right).
\mylabel{eq:U54}
\end{align}
\end{lemma}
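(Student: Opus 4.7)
The plan is to mirror the proofs of Lemmas \lem{zR5dis} and \lem{zR52dis}: expand $\Rtwid(\zeta,q)$ via the Hecke-Rogers identity \myeqn{rankid3}, classify the residues $(n,j) \pmod 5$ for which the exponent $E(n,j) := n(n+1)/2 - j(3j+1)/2$ lies in the desired class, and identify the resulting subsums with $\Rtwid(q,q^5)$ or $\Rtwid(q^2,q^5)$ plus theta corrections. A direct $5 \times 5$ tabulation shows that $E(n,j) \equiv 0 \pmod 5$ holds on exactly six residue classes $(0,0)$, $(0,3)$, $(1,4)$, $(3,4)$, $(4,0)$, $(4,3)$. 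A key simplification is that $\zeta^{n+1} + \zeta^{-n}$ depends only on $n \pmod 5$ and collapses to $(1+\zeta)$ for $n \equiv 0,4$ and to $(\zeta^2 + \zeta^4)$ for $n \equiv 1,3$. Using the cyclotomic identity $2 + 2\zeta + \zeta^3 = (1+\zeta) - (\zeta^2 + \zeta^4)$, the target \myeqn{U50} splits along this two-piece basis, so it suffices to verify the $(1+\zeta)$ and $(\zeta^2 + \zeta^4)$ components separately.

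The $(\zeta^2+\zeta^4)$ component, contributed only by cases $(1,4)$ and $(3,4)$, is the easier half. Writing $n = 5N+1$ (resp.\ $5N+3$), $j = 5M+4$, and applying the change of variable $M' = -M-1$, one finds that the divided exponent $E(n,j)/5$ equals $N(5N+3)/2 - 5M'(3M'+1)/2$ and $(5N+2)(N+1)/2 - 5M'(3M'+1)/2$ respectively. These are precisely the exponents of the $q^{-N}$ and $q^{N+1}$ pieces in $\Rtwid(q,q^5)$ produced by \myeqn{rankid3} with $q \mapsto q^5$ and $z = q$. The $M'$-ranges almost coincide: case $(1,4)$ is missing the terms $M' = N/2$ for even $N$, while case $(3,4)$ contains extra terms $M' = -(N+1)/2$ for odd $N$. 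A direct term-by-term computation shows that these boundary terms assemble into $-J_{5,2} = -\sum_{n\in\mathbb{Z}}(-1)^n q^{n(5n-1)/2}$ via Jacobi's triple product \myeqn{jacp}. Thus the $(\zeta^2+\zeta^4)$ component equals $(\zeta^2+\zeta^4)(\Rtwid(q,q^5) - J_{5,2})$, which matches the target.

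For the $(1+\zeta)$ component (cases $(0,0), (0,3), (4,0), (4,3)$), I would pair $(0,0)$ with $(4,0)$ and $(0,3)$ with $(4,3)$ via $N \mapsto -N-1$ in the $n \equiv 4$ cases. A short computation shows these shifts extend the $n \equiv 0$ sums from $N \ge 0$ to all $N \in \mathbb{Z}$, yielding two doubly-infinite Hecke-Rogers-type sums. After combining with the boundary contributions inherited from the previous step, these should reduce to the theta quotient $J_5^2 J_{5,2}^2 / J_{5,1}^2$ plus the extra $-\Rtwid(q,q^5) + J_{5,2}$ correction demanded by the target. The proof of \myeqn{U54} proceeds identically, with residue $4$ replacing $0$, the six new residue classes $(0,4), (1,1), (1,2), (3,1), (3,2), (4,4)$ in place of the old six, $z = q^2$ replacing $z = q$, and $J_{5,1}$ replacing $J_{5,2}$.

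The main obstacle is the bookkeeping for the $(1+\zeta)$ component: showing that the four Hecke-Rogers subsums combine into precisely $J_5^2 J_{5,2}^2 / J_{5,1}^2$ together with the $\Rtwid(q,q^5)$ correction. This will likely require the $5$-dissection \myeqn{5diss2} of $(q)_\infty$ or the three-term theta identity \myeqn{weier}, echoing the latter's role in the proof of Lemma \lem{jzqm}. Tracking the parity signs $(-1)^{N+M}$ consistently through the multiple substitutions, and verifying that the ranges of $M$ match precisely across the paired cases, are the most error-prone steps.
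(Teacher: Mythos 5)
Your overall strategy coincides with the paper's: expand via \myeqn{rankid3}, isolate the six residue classes $(0,0)$, $(0,3)$, $(1,4)$, $(3,4)$, $(4,0)$, $(4,3)$ on which the exponent is $\equiv 0\pmod 5$, observe that $\zeta^{n+1}+\zeta^{-n}$ takes only the two values $1+\zeta$ (for $n\equiv 0,4$) and $\zeta^2+\zeta^4=-1-\zeta-\zeta^3$ (for $n\equiv 1,3$), and split using $2+2\zeta+\zeta^3=(1+\zeta)-(\zeta^2+\zeta^4)$. Your treatment of the $(\zeta^2+\zeta^4)$ piece (the classes $(1,4)$ and $(3,4)$) is exactly the paper's computation: the substitutions, the divided exponents, the mismatched $j$-ranges, and the boundary terms assembling into $-J_{5,2}$ via \myeqn{jacp} all check out against the paper's \myeqn{V513} and \myeqn{A50}.

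The gap is in the $(1+\zeta)$ piece. You propose to evaluate the four-class subsum over $(0,0),(0,3),(4,0),(4,3)$ head-on, by pairing $n\equiv 0$ with $n\equiv 4$ via $N\mapsto -N-1$ and hoping the resulting bilateral sums reduce to $J_5^2J_{5,2}^2/J_{5,1}^2$ together with the $-\left(\Rtwid(q,q^5)-J_{5,2}\right)$ correction. You correctly flag this as the main obstacle, but you give no mechanism for it, and carrying it out would amount to proving a new Hecke--Rogers/theta identity from scratch (the $j$-ranges in the paired classes do not match up, and the target is not a pure theta quotient). The paper sidesteps this entirely: because the six classes are \emph{all} the classes with exponent $\equiv 0\pmod 5$, the unweighted sum over all six is $U_{5,0}^{*}$ applied to the $z=1$ specialization of \myeqn{rankid3}, i.e.\ $U_{5,0}^{*}\left((q)_\infty^2\right)$ by \myeqn{HRE12}, which equals $J_{25}^2J_{25,10}^2/J_{25,5}^2$ immediately from the classical $5$-dissection \myeqn{5diss2}. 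The four-class sum is then this quantity minus the two-class sum you have already computed, and \myeqn{U50} follows with no further work. You should restructure the $(1+\zeta)$ component as (sum over all six classes) minus (sum over $\mathcal{S}_2=\{(1,4),(3,4)\}$) rather than attacking the four-class sum directly; the same device, with $U_{5,4}^{*}\left((q)_\infty^2\right)=q^4J_{25}^2J_{25,5}^2/J_{25,10}^2$, completes \myeqn{U54}.
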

\begin{proof}
We have
$$
\frac{1}{2}(n^2-3j^2) + \frac{1}{2}(n-j) \equiv 0\pmod{5}
$$
if and only if $(n,j) \equiv (0,0)$, $(0,3)$, $(1,4)$, $(3,4)$, $(4,0)$
or $(4,3)\pmod{5}$. We have the following table
$$
\begin{array}{ccc}
\noalign{\hrule}
\zeta^{n+1} + \zeta^{-n} & n & j \\
\noalign{\hrule}
1+\zeta & 0  & 0\\
1+\zeta & 0  & 3\\
-{\zeta}^{3}-\zeta-1 &  1 &  4\\
-{\zeta}^{3}-\zeta-1 &  3 &  4\\
1+\zeta              &  4 &  0\\
1+\zeta              &  4 &  3\\
\noalign{\hrule}
\end{array}
$$
We let
$$
\mathcal{S}_1 = \{(0,0),\,  (0,3),\,  (1,4),\,  (3,4),\,  (4,0),\, (4,3)\},\qquad
\mathcal{S}_2 = \{(1,4),\,  (3,4)\},
$$
so that by \myeqn{rankid3} we have
\begin{align}
&U_{5,0}^*\left(\Rtwid(\zeta,q)\right)  
\mylabel{eq:U50sums}\\
&\qquad = 
(1+\zeta) \underset{(n,j)\in \mathcal{S}_1 \pmod{5}}
{\sum_{n=0}^\infty \sum_{j=-\lfloor n/2 \rfloor}^{\lfloor n/2\rfloor}}
(-1)^{n+j}
q^{\frac{1}{2}(n^2-3j^2) + \frac{1}{2}(n-j)} 
\nonumber\\
&\qquad\qquad\qquad -(2 + 2\zeta + \zeta^3) 
\underset{(n,j)\in \mathcal{S}_2 \pmod{5}}
{\sum_{n=0}^\infty \sum_{j=-\lfloor n/2 \rfloor}^{\lfloor n/2\rfloor}}
(-1)^{n+j}
q^{\frac{1}{2}(n^2-3j^2) + \frac{1}{2}(n-j)}
\nonumber
\end{align}
Also by \myeqn{rankid3} we have
\beq
\Rtwid(q,q^5) = 
{\sum_{n=0}^\infty \sum_{j=-\lfloor n/2 \rfloor}^{\lfloor n/2\rfloor}}
(-1)^{n+j}\left(q^{n+1} + q^{-n}\right)
q^{\frac{5}{2}(n^2-3j^2) + \frac{5}{2}(n-j)}.
\mylabel{eq:Rtwidq15id}
\eeq
Now we let
\beq
V(n,j) = 
\frac{1}{2}(n^2-3j^2) + \frac{1}{2}(n-j),
\mylabel{eq:Vdef}
\eeq
and find that
\begin{align}
\frac{1}{5} V(5n+1,-5j-1) &= 5\, V(n,j) - n,
\nonumber                
\\
\frac{1}{5} V(5n+3,-5j-1) &= 5\, V(n,j) +n+ 1.
\mylabel{eq:V513}
\end{align}
We solve some inequalities.
$$
-\lfloor \tfrac12(5n+1) \rfloor \le -5j-1 \le \lfloor \tfrac12(5n+1) \rfloor  
\Longleftrightarrow
\begin{cases}
-m \le j \le m-1 & \mbox{if $n=2m$},\\
-m \le j \le m & \mbox{if $n=2m+1$},
\end{cases}
$$
and
$$
-\lfloor \tfrac12(5n+3) \rfloor \le -5j-1 \le \lfloor \tfrac12(5n+3) \rfloor  
\Longleftrightarrow
\begin{cases}
-m \le j \le m & \mbox{if $n=2m$},\\
-m-1 \le j \le m & \mbox{if $n=2m+1$}.
\end{cases}
$$
It follows that
\begin{align}
& A_{5,0}\left(
\underset{(n,j)\in \mathcal{S}_2 \pmod{5}}
{\sum_{n=0}^\infty \sum_{j=-\lfloor n/2 \rfloor}^{\lfloor n/2\rfloor}}
(-1)^{n+j}
q^{\frac{1}{2}(n^2-3j^2) + \frac{1}{2}(n-j)}\right)
\mylabel{eq:A50}\\
&
= 
{\sum_{n=0}^\infty \sum_{j=-\lfloor n/2 \rfloor}^{\lfloor n/2\rfloor}}
(-1)^{n+j}\left(q^{n+1} + q^{-n}\right)
q^{\frac{5}{2}(n^2-3j^2) + \frac{5}{2}(n-j)} 
- \sum_{m=-\infty}^\infty (-1)^m q^{m(5m+1)/2}
\nonumber\\
&
=
\Rtwid(q,q^5) - J_{5,2},
\nonumber
\end{align}
by  \myeqn{rankid3}, \myeqn{Rtwiddef} and Jacobi's
triple product identity \myeqn{jacp}.
Now
\begin{align}
&\underset{(n,j)\in \mathcal{S}_1 \pmod{5}}
{\sum_{n=0}^\infty \sum_{j=-\lfloor n/2 \rfloor}^{\lfloor n/2\rfloor}}
(-1)^{n+j}
q^{\frac{1}{2}(n^2-3j^2) + \frac{1}{2}(n-j)} 
\mylabel{eq:bigsum50}\\
&=
U_{5,0}^{*}\left(
{\sum_{n=0}^\infty \sum_{j=-\lfloor n/2 \rfloor}^{\lfloor n/2\rfloor}}
(-1)^{n+j}
q^{\frac{1}{2}(n^2-3j^2) + \frac{1}{2}(n-j)} 
\right)
\nonumber\\
&=
U_{5,0}^{*}\left(
(q)_\infty^2
\right) \qquad\qquad \mbox{(by \myeqn{HRE12})}
\nonumber\\
&= 
\frac{J_{25}^2\,J_{25,10}^2}{J_{25,5}^2},
\nonumber
\end{align}
by \myeqn{5diss2}. Thus from \myeqn{U50sums}, \myeqn{A50} and \myeqn{bigsum50}
we have
\beq
U_{5,0}\left(\Rtwid(\zeta,q)\right)  
 =(1+ \zeta)\, \frac{J_5^2\,J_{5,2}^2}{J_{5,1}^2}
 - (2 + 2\zeta + \zeta^3) \,\left(\Rtwid(q,q^5) - J_{5,2}\right),
 \mylabel{eq:U50id}
\eeq
which is \myeqn{U50}. The proof of \myeqn{U54} is analogous.
\end{proof}
Now by \myeqn{Rtwiddef} and the definition of $\phi(q)$ we have
\begin{align}
\Rtwid(q,q^5) 
& = (q^2;q^5)_\infty (q^3;q^5)_\infty (q^5;q^5)_\infty \frac{1}{1-q} R(q,q^5)
\mylabel{eq:Rtwidq15}\\ 
& = J_{5,2} \sum_{n=0}^\infty \frac{q^{5n^2}}{(q;q^5)_{n+1} (q^4;q^5)_n}
\qquad\qquad\mbox{(by Jacobi's triple product \myeqn{jacp})}
\nonumber \\
& = J_{5,2}\,\left( 1 + \phi(q)\right).
\nonumber
\end{align}

By replacing $z$ by $\zeta$ in \myeqn{Rtwiddef},
$\zeta$ by $\zeta^2$ in \myeqn{5diss1}
and by \myeqn{R124dis5} we have
\begin{align}
&\Rtwid(\zeta,q) 
\mylabel{eq:Rtwidzqdis}\\
&= (1+\zeta)\left(J_{25,10} + q (\zeta + \zeta^4) J_{25,5}\right)
   \left( R_0(q^5) + q\frac{J_{25}^2}{J_{25,5}} 
   + q^2(\zeta + \zeta^4) \frac{J_{25}^2}{J_{25,10}} 
   + q^3 R_3(q^5)\right)
\nonumber   
\end{align}
We apply $U_{5,0}$ to both sides of \myeqn{Rtwidzqdis} to find that
\beq
U_{5,0}\left(\Rtwid(\zeta,q)\right) = (1+\zeta) \, J_{5,2} \, R_0(q).
\mylabel{eq:U50left}
\eeq
By \myeqn{U50left} and \myeqn{U50id} we have
$$
(1+\zeta) \, J_{5,2} \, R_0(q) = (1 +\zeta) \frac{J_5^2 J_{5,2}^2}{J_{5,1}^2}
- (2 + \zeta + \zeta^3) \, J_{5,2} \,\phi(q),
$$
and we easily deduce \myeqn{R05id}. The proof of \myeqn{R35id} is similar.
\section{Equations for the rank mod $7$}
\mylabel{sec:rankmod7}
In this section we consider Dyson's mod $7$ rank conjecture 
\myeqn{Dysonconj7}. 
The following identity is the corresponding analog of \myeqn{5diss1}.
Through this section we assume $\zeta=\exp(2\pi i/7)$.
\beq
(\zeta q)_\infty (\zeta^{-1} q)_\infty (q)_\infty =
J_{49,21} + q (\zeta^2 + \zeta^{3} + \zeta^4 +\zeta^5) J_{49,14}
- q^3 (\zeta^3 + \zeta^4) J_{49,7}.
\mylabel{eq:zth7dis}
\eeq
The proof of the following lemma is analogous to the proof of Lemma \lem{zR52dis}
and depends on the Hecke-Rogers identities \myeqn{rankid1}, \myeqn{rankid3}
and \myeqn{rankid4}.
\begin{lemma}
\mylabel{lem:zR7dis1}
Let $\zeta= \exp(2\pi i/7)$. Then
\begin{align*}
U_{7,4}\left( 
(\zeta q)_\infty (\zeta^{-1} q)_\infty (q)_\infty \,R(\zeta,q)
\right)
&=  J_7^2, \\
U_{7,4}\left( 
(1+\zeta)(\zeta^2 q)_\infty (\zeta^{-2} q)_\infty (q)_\infty \,R(\zeta,q)
\right)
&=  2 \zeta^4 \, J_7^2\\
U_{7,3}\left( 
(1+\zeta)(\zeta^2 q^2;q^2)_\infty (\zeta^{-2} q^2;q^2)_\infty (q^2;q^2)_\infty 
\,R(\zeta,q)
\right)
&=  2 \zeta^4 \, \frac{J_{14}^3}{J_7}.
\end{align*}
\end{lemma}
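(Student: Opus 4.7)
The plan is to mimic closely the proof of Lemma \lem{zR52dis}, handling each of the three stated identities in turn by substituting $z=\zeta$ into the corresponding Hecke-Rogers identity from Theorem \mythm{rankids}.

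For the first identity I would start from \myeqn{rankid1} with $z=\zeta$, so that the right-hand side becomes a double sum in which each $q$-power is $q^{V_1(n,j)}$ or $q^{V_2(n,j)}$, where $V_1(n,j)=\tfrac12(n^2-3j^2)+\tfrac12(n-j)$ and $V_2(n,j)=\tfrac12(n^2-3j^2)+\tfrac12(n+j)$, and the accompanying cyclotomic factor is $\zeta^{n-3j}+\zeta^{3j-n}$ or $\zeta^{n-3j+1}+\zeta^{3j-n-1}$. I would then enumerate the residue classes of $(n,j)\pmod 7$ for which $V_1\equiv 4$ or $V_2\equiv 4\pmod 7$, and check that on exactly these classes one has $n-3j\equiv 0$ (respectively $n-3j+1\equiv 0$) $\pmod 7$. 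In that case each bracketed cyclotomic factor collapses to $2$, the sifted sum agrees with $U_{7,4}(E(q)^2)$, and the identity follows from the auxiliary mod-$7$ sifting identity $U_{7,4}(E(q)^2)=J_7^2$, the direct analog of \myeqn{sift1}.

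For the second identity I would apply $U_{7,4}$ to \myeqn{rankid3} with $z=\zeta$. Here the cyclotomic factor in the summand is $\zeta^{n+1}+\zeta^{-n}$, which equals $2\zeta^4$ exactly when $n\equiv 3\pmod 7$, and I expect the congruence $V_1(n,j)\equiv 4\pmod 7$ to force precisely this value of $n\pmod 7$ on the surviving classes. The sifted sum then reduces to $2\zeta^4\,U_{7,4}(E(q)^2)=2\zeta^4 J_7^2$. For the third identity I would apply $U_{7,3}$ to \myeqn{rankid4} with $z=\zeta$; the base on the left is now $q^2$, so the core sum after sifting should match $2\zeta^4$ times $U_{7,3}(J_2^3/J_1)$, with the auxiliary mod-$7$ identity $U_{7,3}(J_2^3/J_1)=J_{14}^3/J_7$ supplying the final product form. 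This latter identity can be read off from a $7$-dissection of $J_2^3/J_1$ in the spirit of Ramanujan's work on $p(7n+5)\equiv 0\pmod 7$.

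The main obstacle is the package of auxiliary mod-$7$ dissection identities playing the role of Lemma \lem{5sifttheta}, especially $U_{7,4}(E(q)^2)=J_7^2$ and $U_{7,3}(J_2^3/J_1)=J_{14}^3/J_7$. These should in principle follow from Atkin and Swinnerton-Dyer's general $p$-dissection of $E(q)$ together with Watson's quintuple product identity, but the bookkeeping is substantially heavier than in the mod-$5$ case because there are more residue classes to track. The delicate point in the main argument is verifying that the surviving $(n,j)$-pairs in each case pin down the cyclotomic factor to a single constant (namely $2$ or $2\zeta^4$), so that the sifted sum becomes a scalar multiple of the known $z=1$ sifting; once this is checked, the conclusion follows by direct substitution.
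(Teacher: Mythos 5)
Your plan is the paper's plan: the paper gives no details for this lemma beyond saying the proof is analogous to that of Lemma \lem{zR52dis} and depends on \myeqn{rankid1}, \myeqn{rankid3} and \myeqn{rankid4}, which is exactly the correspondence you set up, and the mechanism you describe does work. Writing $V(n,j)=\tfrac12(n^2-3j^2)+\tfrac12(n-j)$, completing the square gives $2V(n,j)\equiv (n+4)^2+4(j+6)^2+1\pmod 7$, and since $3$ is a quadratic non-residue mod $7$ the congruence $V\equiv 4\pmod 7$ forces the single class $(n,j)\equiv(3,1)$ (and $(3,6)$ for the companion exponent $\tfrac12(n^2-3j^2)+\tfrac12(n+j)$); on these classes $n-3j\equiv0$, resp.\ $n-3j+1\equiv 0$, and $\zeta^{n+1}+\zeta^{-n}=2\zeta^4$, so the first two identities reduce as you say to $U_{7,4}(E(q)^2)=J_7^2$, which follows from the $7$-dissection of $E(q)$ because only the product of the two $q^2$-parts lands in the residue class $4$. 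The analogous non-residue computation handles the exponents $3n^2\pm2n-\tfrac12 j(j+1)$ in \myeqn{rankid4}, pinning $j\equiv3$ and hence the factor $2\zeta^4$.

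The one genuine gap is your third auxiliary identity. The constant term of $U_{7,3}(J_2^3/J_1)$ is the coefficient of $q^3$ in $(q^2;q^2)_\infty^3/(q;q)_\infty=(1-3q^2+\cdots)(1+q+2q^2+3q^3+\cdots)$, namely $3-3=0$, whereas $J_{14}^3/J_7$ has constant term $1$; the correct statement is $U_{7,3}(J_2^3/J_1)=q\,J_{14}^3/J_7$, and accordingly the third identity of the lemma should carry a factor of $q$ on its right-hand side --- compare the paper's own equation \myeqn{EQN73}, where that factor does appear. So as written your chain of reasoning would stall at an unprovable auxiliary identity (and, once corrected, it proves a slightly different statement than the one printed). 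Note also that this $U_{7,3}$ evaluation is of a different character from $U_{7,4}(E(q)^2)=J_7^2$: four pairs of residue classes of the two factors of $J_2^3/J_1$ contribute to the $q^{7n+3}$ part, so it does not collapse to a single product of dissection pieces and genuinely requires the heavier Atkin--Swinnerton-Dyer/quintuple-product (or modular function) argument you defer.
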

We write the $7$-dissection of $R(\zeta,q)$:
$$
R(\zeta,q) = \sum_{k=0}^6 q^k R_k(q^7).
$$
Dyson's mod $7$ rank conjecture is equivalent to showing that
$$
U_{7,5}\left(R(\zeta,q)\right) = R_5(q) = 0.
$$
Unfortunately we have only been able to find three linear equations
for the functions
$R_1(q)$, $R_3(q)$ and $R_4(q)$.
\beq
-(\zeta^4 + \zeta^3)\,J_{7,1}\,R_1(q) 
+(\zeta^5 + \zeta^4 + \zeta^3 + \zeta^2)\, J_{7,2} \,R_3(q) 
+ J_{7,3}\,R_4(q) = J_7^2,
\mylabel{eq:EQN71}
\eeq
\beq
(\zeta^5 + \zeta^4 + \zeta^3)\,J_{7,1}\,R_1(q) 
+\zeta^4 \, J_{7,2} \,R_3(q) 
+ (\zeta+1)\,J_{7,3}\,R_4(q) = 2\,\zeta^4\,J_7^2,
\mylabel{eq:EQN72}
\eeq
\beq
\zeta^4 \,J_{14,4}\,R_1(q) 
+(\zeta+1) \, J_{14,6} \,R_3(q) 
+ (\zeta^5+\zeta^4+\zeta^3)\,q\,J_{14,2}\,R_4(q) 
= 2\,\zeta^4\,q\,\frac{J_{14}^3}{J_7}. 
\mylabel{eq:EQN73}
\eeq
Using these equations it is possible to show that
$$
R_1(q) = \frac{J_7^2}{J_{7,1}},\quad
R_3(q) = (\zeta^5+\zeta^2+1)\,\frac{J_7^2}{J_{7,2}},\quad
R_4(q) = -(\zeta^5+\zeta^2)\,\frac{J_7^2}{J_{7,3}}.         
$$
We have been unable to find 
a fourth linear equation only involving $R_1(q)$, $R_3(q)$, $R_4(q)$, $R_5(q)$.
This should be compared with equations \myeqn{EQ1}, \myeqn{EQ2} and
\myeqn{EQ3}, which were enough to prove the mod $5$ conjecture \myeqn{Dysonconj5}.

\section{Conclusion}
\mylabel{sec:end}

In this paper we presented a new approach to proving Dyson's rank conjectures. 
This new approach involved utilising various Hecke-Rogers identities.
We showed how this method gave a new proof for Dyson's mod 5 rank
conjecture as well as the related identity in Ramanujan's Lost Notebook.
We end by listing some problems.
\begin{enumerate}
\item[1.]
Extend the methods of this paper to prove Dyson's mod $7$ rank conjecture
\myeqn{Dysonconj7}, and find a new proof for the mod $7$ 
analog of
Ramanujan's identity \myeqn{Ramid5}. 
See \cite[p.16]{An-Be-RLNIII}.
\item[2.]
Find simple proofs of the four Hecke-Rogers identities 
\myeqn{rankid1}--\myeqn{rankid4}. Combinatorial proofs are also needed.
These results would lead to a truly elementary proof of Dyson's rank
conjectures.  
\item[3.]
Apply the methods of this paper to other rank-type functions, including
Lovejoy's overpartition rank \cite{Lo2005}, Berkovich and the author's
$M_2$-rank \cite{Be-Ga02}, and Jenning-Shaffer's exotic
Bailey-Slater spt-functions \cite{JeSh2017}.
\end{enumerate}

\subsection*{Data Availability Statement}
Data sharing not applicable to this article as the research of this paper
does not involve the use of any datasets.

\subsection*{Acknowledgments}
The author would like to thank
Jonathan Bradley-Thrush for showing how his Theorem \thm{jbt} can be used 
to prove the Hecke-Rogers identities \myeqn{rankid2} and \myeqn{rankid4}.

\end{document}